\newtheorem{theorem}{Theorem}[section]
\newtheorem{lemma}[theorem]{Lemma}
\newtheorem{proposition}[theorem]{Proposition}
\newtheorem{corollary}[theorem]{Corollary}
\newtheorem{remar}[theorem]{Remark}
\theoremstyle{definition}
\newtheorem{prob}[theorem]{Open Problem}
\newtheorem{example}[theorem]{Example}
\newenvironment{remark}{\begin{remar}\rm}{\end{remar}}
\newcommand{\QED}{{\unskip\nobreak\hfil\penalty50%
\hskip1em\hbox{}\nobreak\hfil $\Box$%
\parfillskip=0pt \finalhyphendemerits=0 \par\medskip\noindent}}
\newcommand{\EXend}{{\unskip\nobreak\hfil\penalty50%
\hskip1em\hbox{}\nobreak\hfil $\diamondsuit$%
\parfillskip=0pt \finalhyphendemerits=0 \par\medskip\noindent}}
\newcommand{\bfind}[1]{\index{#1}{\bf #1}}
\newcommand{\sn}{\par\smallskip\noindent}
\newcommand{\sep}{^{\rm sep}}
\newcommand{\chara}{\mbox{\rm char}\,}
\newcommand{\Gal}{\mbox{\rm Gal}\,}
\newcommand{\modd}{\mbox{\rm mod }\,}
\newcommand{\crf}{\mbox{\rm crf}\,}
\newcommand{\N}{\mathbb N}
\newcommand{\Z}{\mathbb Z}
 \newcommand{\abaddress}{\par\bigskip \small\rm
Institute of Mathematics,
University of Silesia, \par
Bankowa 14,
40-007 Katowice,
Poland \par
e-mail: anna.rzepka@us.edu.pl}
 \newcommand{\psaddress}{\par\bigskip \small\rm
Institut für Algebra, Technische Universit\"at Dresden, \par
Zellescher Weg 12-14 , 01062 Dresden, Germany \par
e-mail:  piotr.szewczyk@tu-dresden.de}
\begin{document}
\title{Defect extensions and a characterization of tame fields }
\author{Anna Rzepka and Piotr Szewczyk}
\date{5.\ 08.\ 2022}
\begin{abstract}\noindent
We study the relation between two important classes of valued fields: tame fields and defectless fields. We show that in the case of valued fields of equal characteristic or  rank one valued fields of mixed characteristic, tame fields are exactly the valued fields for which all algebraic extensions are defectless fields. In general tame fields form a proper subclass of valued fields for which all algebraic extensions are defectless fields. We introduce a wider class of roughly tame fields and show that every algebraic extension of a given valued field is defectless if and only if its henselization is roughly tame. Proving the above results we also present constructions of Galois defect extensions in positive as well as  mixed characteristic. 
\end{abstract}
\subjclass[2010]{12J10, 12J25}
\maketitle
\let\thefootnote\relax\footnote{The second author was funded by the Deutsche
Forschungsgemeinschaft (DFG) - 404427454.}

\section{Introduction}
In this paper we consider two notions which play an important role in the theory of valued fields, namely tame fields and defectless fields. 
Both notions appear in connection with attempts to solve deep open problems in positive characteristic, like local uniformization (the local form of resolution of singularities) and the decidability of the Laurent series field over a finite field and of its perfect hull (cf., e.g.,~\cite{[K3]},~\cite{[K1]}). The common underlying problem for these questions is the description of the structure of valued function fields of positive characteristic. The main difficulty that appears here is the phenomenon of defect extensions. A nontrivial defect shows a ``bad behaviour'' of the extension of a given valuation. Therefore, it is desirable to work with valued fields admitting no defect extensions, called defectless fields. 

Another obstacle which we encounter in positive characteristic is the existence of inseparable extensions. The defect and purely inseparable extensions can be eliminated by taking a narrower class of valued fields, namely tame fields. This class turned out to have  properties which were crucial for the proof of strong results, like local uniformization by alteration (cf.~\cite{[K0]}) or model completeness and decidability relative to the elementary theories of the value group and residue
field of a given tame valued field (see~\cite{[KTF]}). 

The aim of the paper is to state the relation between the notions of defectless and tame fields and to describe in which way the property of being a tame field is stronger than being a defectless field. In Sections 3 and 4 we prove the following result. 
\begin{theorem}		\label{charact_equal}
Assume that $(K,v)$ is a nontrivially valued henselian field. If $(K,v)$ is of mixed characteristic, assume moreover  that the value group of the field is archimedean. Then $(K,v)$ is tame if and only if every algebraic extension of $(K,v)$ is a defectless field. 
\end{theorem}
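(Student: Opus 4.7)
I would split the proof in two implications, with the trivial content sitting in the forward direction and all the work in the converse.

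For the forward direction, I would invoke the standard characterization that a henselian valued field $(K,v)$ is tame if and only if $K^{\mathrm{alg}}$ coincides with the maximal tame extension $K^r$ of $K$. If $(K,v)$ is tame and $L/K$ is any algebraic extension, then $L\subseteq K^{\mathrm{alg}}=K^r$, so $L^{\mathrm{alg}}=K^{\mathrm{alg}}=K^r \subseteq L^r \subseteq L^{\mathrm{alg}}$, forcing $L^r=L^{\mathrm{alg}}$; hence $L$ is tame, and in particular every finite extension of $L$ satisfies the fundamental equality, so $L$ is defectless.

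For the converse I would argue by contraposition: assuming that the henselian field $(K,v)$ is not tame (and archimedean in mixed characteristic), produce an algebraic extension $L/K$ admitting a Galois defect extension of degree equal to the residue characteristic $p$. Since a henselian field of residue characteristic zero is automatically tame, one may assume $p>0$, and then non-tameness means that either $vK$ fails to be $p$-divisible or $Kv$ fails to be perfect (equivalently, $K^r\subsetneq K^{\mathrm{alg}}$). The plan is to use the witness of non-tameness to build, inside some finite extension $L$ of $K$, a pseudo-Cauchy sequence of transcendental type whose limit lies in an immediate, hence defect, algebraic extension $M/L$. Concretely: in equal characteristic $p$, $M=L(\vartheta)$ will be Artin--Schreier with $\vartheta^p-\vartheta=c$, where $c$ is obtained as a truncation of a telescoping series built from a non-$p$-divisible value or a non-perfect residue; in mixed characteristic, $M=L(\vartheta)$ will be Kummer with $\vartheta^p=1+c$ for $c$ of sufficiently large positive value, after enlarging $L$ to contain a primitive $p$-th root of unity.

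The main obstacle is the mixed-characteristic construction, where one must control simultaneously the valuation of $p$, the additive and multiplicative Artin--Schreier/Kummer correspondences near $1$, and the approximation of $p$-th powers; this is exactly the point at which the archimedean (rank one) hypothesis on $vK$ becomes essential, since it turns $vK$ into a subgroup of $\mathbb{R}$ and allows the use of classical approximation lemmas for $p$-th powers in henselian fields. A secondary but crucial delicate point is verifying that the candidate $M/L$ is genuinely a defect extension (i.e., immediate) rather than a wildly ramified or residually inseparable degree-$p$ extension; this amounts to showing that the pseudo-Cauchy sequence used to define $\vartheta$ neither enlarges $vL$ nor $Lv$ in the limit, which forces a careful alternation between extracting $p$-th roots (or Artin--Schreier roots) of ``small'' elements and passing to larger algebraic subextensions of $K^{\mathrm{alg}}/K$. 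This balancing act is what occupies Sections 3 and 4 and is referenced in the abstract as the construction of Galois defect extensions in positive as well as mixed characteristic.
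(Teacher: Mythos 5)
Your forward direction is correct and is exactly the paper's argument (Proposition~\ref{tame_impl_def}, via $L^r=L.K^r=\tilde{L}$). The converse, however, contains a genuine error: you propose to realize the immediate degree-$p$ extension $M|L$ with $L$ a \emph{finite} extension of $K$. That cannot work. In the only nontrivial case $(K,v)$ is already defectless (otherwise take $L=K$ and stop), and a finite extension $L$ of a henselian defectless field is again defectless; then any finite immediate extension $M|L$ would satisfy $[M:L]=(vM:vL)[Mv:Lv]=1$, so no such $M$ exists. The base field of the defect extension must be an \emph{infinite} algebraic extension of $K$: in the paper one takes $L=\bigcup_n K(a_n)$ with $a_n^p=a_{n-1}$ (or the analogous residue-field tower), each finite stage $K(a_n,\vartheta)|K(a_n)$ being a defectless ramified or residual extension, and the defect of $L(\vartheta)|L$ materializes only in the union. (Relatedly, a pseudo-Cauchy sequence of \emph{transcendental} type has only transcendental limits by Kaplansky's theorem; the relevant sequences here are of algebraic type.) Note also that in equal characteristic the paper's actual proof avoids such constructions entirely: it passes to the perfect hull $L$ of the absolute ramification field $K^r$, where $vL$ is divisible and $Lv$ is algebraically closed while $L$ is not separably algebraically closed, so \emph{every} proper finite extension of $L$ is a defect extension. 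Your explicit Artin--Schreier route corresponds to the paper's ``alternative proof'' via Examples~\ref{valgp_eqal} and~\ref{resf_eqal}.

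Second, your diagnosis of where the archimedean hypothesis enters is off. The congruence $(c_1+\dots+c_n)^p\equiv c_1^p+\dots+c_n^p \pmod{\mathcal{O}_K}$ for $vc_i\geq -vp/p$, which is what makes the mixed-characteristic Kummer telescoping work, holds in arbitrary rank; no embedding of $vK$ into $\R$ and no rank-one approximation lemma is used. What the hypothesis actually buys is $(vK)_{vp}=vK$, i.e.\ that a failure of $p$-divisibility of $vK$ is witnessed by some $\alpha$ with $-vp\leq\alpha<0$ and $\frac{1}{p}\alpha\notin vK$: the construction only ``sees'' non-$p$-divisibility inside the convex hull of $vp$. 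Without this the contrapositive is simply false --- the paper's Example~\ref{counterexample} (the $v_x$-henselization of $K(x)$ with $v=v_x\circ v_1$ over a tame $(K,v_1)$) is a mixed-characteristic henselian field of rank $2$ whose value group is not $p$-divisible, hence not tame, yet every algebraic extension of it is a defectless field. Any correct proof must therefore isolate the convex subgroup $(vK)_{vp}$ (or, as the paper does, pass to the core field and the notion of roughly tame fields), which your sketch does not do; in the imperfect-residue-field case one additionally needs either a coherent sequence of $p$-power roots of an element of small negative value or a detour through the ramification field as in the paper's Lemma~\ref{2ext}.
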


We also show  that for valued fields of mixed characteristic of higher rank the above theorem is in general not true. We construct an example of a mixed characteristic valued field $(K,v)$ which is not tame but every algebraic extension of  $(K,v)$ is a defectless field  (cf.~Example \ref{counterexample}). 
We therefore introduce the wider class of roughly tame fields (cf.~Section~\ref{sec_mixed}), which coincides with the class of tame fields in the equal characteristic case, and prove the following fact.

\begin{theorem}  \label{thm_henrtame}
Every algebraic extension of $(K,v)$ is a defectless field if and only if the henselization of $(K,v)$ is a roughly tame field. 
\end{theorem}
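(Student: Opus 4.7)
The plan is to reduce the statement to Theorem~\ref{charact_equal} in two steps: first pass to the henselization, then, in the mixed-characteristic higher-rank case, decompose $v$ through a coarsening of residue characteristic zero so that the remaining valuation is of equal characteristic, where rough tameness coincides with tameness.

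For the reduction to henselian, I would use that $(K^h,v^h)$ is an immediate algebraic extension of $(K,v)$ and that every finite extension of $K^h$ has the form $L.K^h$ for some finite $L|K$, with equal defect. Hence ``every algebraic extension of $(K,v)$ is defectless'' is equivalent to the same statement for $(K^h,v^h)$. Since the right-hand side of the theorem is by definition a property of $K^h$, the claim reduces to: a henselian $(K,v)$ is roughly tame iff every algebraic extension of $(K,v)$ is defectless.

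If $(K,v)$ has equal characteristic, rough tameness equals tameness and Theorem~\ref{charact_equal} applies directly. In the mixed-characteristic case $(0,p)$, I would introduce the coarsening $w$ of $v$ associated to the smallest convex subgroup of $vK$ containing $v(p)$. Then $(K,w)$ has residue characteristic zero, so $w$ is defectless on every finite extension, while the residue valuation $\bar v$ on $Kw$ has equal characteristic $p$. Henselianity of $v$ passes to henselianity of both $w$ and $\bar v$. By the multiplicativity of defect under composition of valuations, the $v$-defect of any finite extension $L|K$ equals the $\bar v$-defect of the residue extension $Lw|Kw$. The definition of roughly tame (cf.\ Section~\ref{sec_mixed}) should be precisely henselianity of $(K,v)$ together with tameness of $(Kw,\bar v)$. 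In the forward direction, rough tameness then yields that $(Kw,\bar v)$ is tame, hence every algebraic extension of $Kw$ is defectless (tameness is preserved under algebraic extensions), so every algebraic extension of $K$ is $v$-defectless. Conversely, if every algebraic extension of $K$ is $v$-defectless, then for every finite $M|Kw$ one realises $M = Lw$ for some finite $L|K$ and deduces that $M|Kw$ is $\bar v$-defectless; Theorem~\ref{charact_equal} applied to the equal-characteristic henselian field $(Kw,\bar v)$ then yields its tameness, i.e.\ rough tameness of $K$.

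The main obstacle will be establishing the two technical ingredients that power the coarsening argument: the defect-composition identity $d(L|K,v) = d(Lw|Kw,\bar v)\cdot d(L|K,w)$ together with the triviality of the second factor (which uses that $Kw$ has characteristic zero, so the fundamental inequality for $w$ is an equality), and the surjectivity of the residuation $L \mapsto Lw$ on finite algebraic extensions (needed for the converse direction). Both are classical in spirit but must be stated carefully for arbitrary value groups, and the definition of rough tameness has to be calibrated to match this coarsening exactly; otherwise the equivalence with defectlessness breaks, as Example~\ref{counterexample} demonstrates.
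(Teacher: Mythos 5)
Your reduction to the henselization is sound and matches the paper's strategy (it is exactly what Theorem~\ref{iffdef} and the identity $L^h=L.K^h$ give), and your forward direction through the coarsening is correct. But the converse direction rests on a false claim: you assert that the residue valuation $\bar v$ on $Kw$ is of \emph{equal} characteristic. It is not. By construction $w=v_0$ has residue characteristic $0$, so $\mathrm{char}\,Kw=0$, while $(Kw)\bar v=Kv$ has characteristic $p$; the core field $(Kw,\bar v)$ is therefore again of mixed characteristic. Its value group is $(vK)_{vp}$, which does satisfy $((vK)_{vp})_{vp}=(vK)_{vp}$ but need \emph{not} be archimedean (e.g.\ $vK=\Z\times\Gamma$ lexicographically with $vp$ in the top component and $\Gamma$ nontrivial gives $(vK)_{vp}=vK$ of rank $>1$). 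Consequently Theorem~\ref{charact_equal} — which in mixed characteristic is stated only for archimedean value groups — does not apply to the core field, and your converse does not close. There is also a circularity hazard: in the paper the mixed-characteristic case of Theorem~\ref{charact_equal} is itself deduced from Theorem~\ref{thm_rtame}, so it cannot serve as the input.

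What is missing is precisely the hard content of Section~\ref{sec_mixed}: if the core field is henselian, defectless, with $p$-divisible value group but imperfect residue field (or with non-$p$-divisible value group), one must \emph{construct} an algebraic extension admitting a Galois defect extension of degree $p$. In mixed characteristic this cannot be done by passing to the perfect hull as in the equicharacteristic proof; the paper does it via the Kummer-type constructions of Examples~\ref{ex_valgp_mixed} and~\ref{ex2_resf_mixed}, packaged as Lemmas~\ref{prop_valgp_mixed} and~\ref{prop_resf_mixed}, which feed into Theorem~\ref{thm_rtame}. Your proposal would be repaired by replacing the appeal to Theorem~\ref{charact_equal} with an appeal to (or a proof of) Theorem~\ref{thm_rtame} for the mixed-characteristic core field; everything else in your outline — the henselization step, the defect-composition identity, and the surjectivity of $L\mapsto Lw$ on finite extensions — is correct and is how the paper argues.
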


For the proofs of the above theorems, it is crucial to show that under certain conditions, for a given valued field $(K,v)$ we can construct an algebraic extension $(L|K,v)$ such that $(L,v)$ admits a defect extension. 
Since the defect plays a crucial role in deep open problems in valuation theory, a better understanding of the structure  of such extensions is essential.  Using ramification theory, the study of defect extensions of valued fields of positive residue characteristic $p$  can be reduced to the study of normal defect extensions of degree~$p$. Thus we are particularly interested in the structure and constructions of Galois defect extensions of degree~$p$. While proving the above results we also present constructions of Galois defect extensions of prime degree. This provides examples for the further study of the structure of defect extensions. 

In Section~\ref{sec_comar} we describe relations between some generalizations of the notion of tame fields, like roughly tame fields, roughly deeply ramified fields  and semitame fields.

\section{Preliminaries} \label{preliminaries}
In this paper, we denote by $(K,v)$ a field $K$ with a valuation $v$. We denote its value group  by $vK$ and its residue field by $Kv$. If $L|K$ is a field extension, then by $(L|K,v)$ we denote a valued field extension, where $v$ is a valuation of the field $L$ and $K$ is equipped with the restriction of $v$ to $K$. The {\bf residue characteristic exponent} of $(K,v)$ is a natural number $p$ equal to the residue characteristic $Kv$ if it is positive and $p=1$ if char$Kv=0$. We say that $(K,v)$ is of equal characteristic if char$K=$char$Kv$. Otherwise we say that the valued field is of mixed characteristic.  The valuation ring of $(K,v)$ will be denoted by $\mathcal{O}_v$ or $\mathcal{O}_K$.  We identify equivalent valuations. 

Take a valued field $(K,v)$ of residue characteristic $p>0$. We denote by $(vK)_{vp}$ the smallest convex subgroup of $vK$ that contains $vp$ if $\chara K=0$, and set $(vK)_{vp}=vK$ otherwise.

By $\tilde{K}$ and $K^{sep}$ we denote the algebraic closure and separable algebraic closure, respectively, of a given field $K$.

For a torsion-free abelian group $\Gamma$ we set $\tilde{\Gamma}$ to be the divisible hull of $\Gamma$. Fix a prime number $p$. Then the smallest subgroup of $\tilde{\Gamma}$ which contains $\Gamma$ and is $p$-divisible will be called the {\bf $p$-divisible hull of $\Gamma$}  and denoted by $\frac{1}{p^{\infty}}\Gamma$. The smallest subgroup of $\tilde{\Gamma}$ which contains $\Gamma$ and is $n$-divisible for any $n$ prime to $p$ will be called the  {\bf $p'$-divisible hull of $\Gamma$}.

For the basic facts about valued fields and the details of the facts presented in this section, we refer the reader to \cite{[E],[EP],[K5],[W2],[ZS]}.
\subsection{Defect extensions and defectless  fields}

 Take  a valued field $(K,v)$ and an algebraic extension $L$ of $K$. If $v$ extends in a unique way to a valuation of $L$, then $(L|K,v)$ is called a \bfind{unibranched extension.} 
The valued field $(K,v)$ is called \bfind{henselian} if it satisfies Hensel's Lemma. This is equivalent to the fact that its valuation admits a unique 
extension  to the algebraic closure of this field, in other words, $(\tilde{K}|K,v)$  is a unibranched extension. Note that an algebraic extension of a henselian field is again a henselian field, with respect to the unique extension of the valuation. 

Take a valued field $(K,v)$. Then there is a minimal henselian field extension of $(K,v)$, in the sense that it admits a valuation preserving embedding over $K$ in every henselian extension of $(K, v)$. This extension is unique up to valuation preserving isomorphism over $(K,v)$ (cf.~Theorem 17.11 of~\cite{[E]}). We call this extension the \bfind{henselization} of $(K,v)$ and denote it by $(K,v)^h$, or by $K^h$ if $v$ is fixed.

Take a valued field $(K,v)$ and a finite extension $L$ of $K$. If $v_1,\ldots, v_g$ are the distinct extensions of the valuation $v$ of $K$ to the field $L$, then $L|K$ satisfies the \bfind{fundamental inequality} \mbox{(cf.~Corollary 17.5 of \cite{[E]}):}
\[
[L:K]\> \geq \> \sum_{i=1}^g (v_iL:v_iK)[Lv_i:Kv_i]. 
\]
If $g=1$, that is, if $(L|K,v)$ is unibranched, then by the \bfind{Lemma of Ostrowski} (see \cite{[ZS]}, Chapter VI, \S 12, Corollary to Theorem~25):  
\begin{equation} \label{Ostrowski}
[L:K] \> = \>p^n (vL:vK)[Lv:Kv],
\end{equation}
where  $n$ is a nonnegative integer and  $p$ is the residue characteristic exponent of $(K,v)$.
The factor $p^{n}$ is called the {\bf defect} of the extension $(L|K,v)$ and is denoted by $d(L|K,v)$. If it is nontrivial, that is, if $d(L|K,v)>1$, then $(L|K,v)$ is called {\bf a defect extension}. Otherwise $(L|K,v)$ is called a 
{\bf defectless extension}. Note that the second case holds always for valued fields of residue characteristic zero. 
We call $(K,v)$ a  {\bf defectless field} if equality holds in the fundamental inequality for every finite extension of $(K,v)$. Note that if $(K,v)$ is henselian, then it is a defectless field if and only if every finite extension of $(K,v)$ is defectless. Moreover, we have the following result (for the proof see~\cite{[K5]}; a partial proof was given in Theorem 18.2 of~\cite{[E]}).

\begin{theorem}
\label{iffdef}
A valued field $(K, v)$ is defectless if and only if its henselization $(K, v)^h$
is defectless.
\end{theorem}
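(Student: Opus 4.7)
The plan is to use the fundamental decomposition of a finite extension into henselizations of its completions. Specifically, for a finite extension $L|K$ with distinct extensions $v_1, \ldots, v_g$ of $v$ to $L$, there is a canonical isomorphism
\[
L \otimes_K K^h \;\cong\; \prod_{i=1}^g L^{h_i},
\]
where $L^{h_i}$ denotes the henselization of $(L,v_i)$, viewed as a finite extension of $K^h$. This yields the degree formula $[L:K] = \sum_{i=1}^g [L^{h_i}:K^h]$. Since each $L^{h_i}|K^h$ is a unibranched extension of henselian fields, Ostrowski's lemma (formula~(\ref{Ostrowski})) gives $[L^{h_i}:K^h] = d_i e_i f_i$ where $d_i = d(L|K, v_i)$ is the defect, $e_i = (v_iL : vK)$, and $f_i = [Lv_i : Kv]$; here I use the standard fact that henselization preserves both the value group and the residue field.

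For the implication $(K^h,v) \text{ defectless} \Rightarrow (K,v) \text{ defectless}$, I take any finite extension $L|K$. Each finite extension $L^{h_i}|K^h$ is defectless by assumption, so $[L^{h_i}:K^h] = e_i f_i$. Summing and using the decomposition above, I get $[L:K] = \sum_{i=1}^g e_i f_i$, i.e.\ the fundamental inequality is an equality for $L|K$, so $d_i=1$ for all $i$ and $(K,v)$ is defectless.

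For the converse, I take a finite extension $M|K^h$ and must show it is defectless. Since $K^h|K$ is algebraic, any primitive element $\alpha$ of $M$ over $K^h$ is algebraic over $K$; setting $L := K(\alpha)$, I obtain a finite extension $L|K$ with $L \cdot K^h = M$. The embedding $L \hookrightarrow M$ picks out one of the extensions $v_i$ of $v$ to $L$ (namely the restriction of $v$ to $L$ under this embedding), and then $M$ is $K^h$-isomorphic to the henselization $L^{h_i}$. Since $(K,v)$ is defectless, the defect of $(L|K,v_i)$ is $1$, so by the degree formula above $[M:K^h] = [L^{h_i}:K^h] = e_i f_i$, meaning $M|K^h$ is defectless. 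This concludes the proof since $(K^h,v)$ is defectless if and only if every finite extension of it is defectless (as $K^h$ is henselian).

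The main obstacle is a clean invocation of the tensor-product decomposition $L \otimes_K K^h \cong \prod_i L^{h_i}$, together with the preservation of value group and residue field under henselization; these are standard but need to be cited carefully (e.g.\ from~\cite{[E]}), since the whole argument hinges on matching the defects of $(L|K,v_i)$ with those of the unibranched extensions $L^{h_i}|K^h$.
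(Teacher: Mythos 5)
The paper does not actually prove Theorem~\ref{iffdef}: it only refers to \cite{[K5]} and to the partial proof in Theorem 18.2 of \cite{[E]}, so there is no in-paper argument to compare against. Your route --- decomposing $L\otimes_K K^h$ into the henselizations $L^{h_i}$ and playing Ostrowski's formula on each factor off against the fundamental inequality for $L|K$ --- is the standard proof of this statement, and the bookkeeping with $e_i$, $f_i$, $d_i$ and the immediacy of henselization is correct in both directions.

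Two points need to be tightened. First, in the converse direction you take ``any primitive element of $M$ over $K^h$'', but a finite extension $M|K^h$ need not be simple when it is inseparable, and inseparable extensions in residue characteristic $p>0$ are exactly the case where the defect lives; replace the primitive element by finitely many generators $\alpha_1,\dots,\alpha_n$ of $M|K^h$ and set $L:=K(\alpha_1,\dots,\alpha_n)$, after which $L.K^h=M$ and the identification $M=L^{h_i}$ goes through as before ($M$ is henselian and contains $L$, so $M\supseteq L^{h_i}$; conversely $L^{h_i}$ contains both $L$ and the henselization $K^h$ of $K$ inside it, hence contains $L.K^h=M$). Second, the decomposition $L\otimes_K K^h\cong\prod_i L^{h_i}$ must be justified for arbitrary finite $L|K$, not only separable ones; the key input is that $K^h|K$ is separable algebraic, so $L\otimes_K K^h$ is a reduced Artinian $K^h$-algebra and hence a product of fields, each factor being a composite $L.K^h$ that coincides with the henselization of $L$ at the valuation it induces, with distinct factors inducing distinct extensions of $v$ and every extension arising. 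With these repairs, and a careful citation for the decomposition (it is contained in Section 3.3 of \cite{[EP]} and in \cite{[K5]}), the argument is complete.
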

For the proof see~\cite{[K5]}; a partial proof was given in Theorem 18.2 of~\cite{[E]}. 
 
Take a finite extension  $(L|K,v)$ of henselian fields and a subextension $(E|K,v)$ of $(L|K,v)$. Since degree of a field extension, ramification index and inertia degree are multiplicative, the defect is also  multiplicative: 
\[ 
d(L|K,v)\>=\> d(L|E,v)\cdot d(E|K,v).
\]
This implies that a finite extension of a defectless field is again a defectless field. We also have that the henselization of defectless field is defectless. However, in general an infinite algebraic extension of a defectless valued  field may not be anymore a defectless field (cf. Remark~\ref{remark}). 

Assume that $(K,v)$ is a henselian valued field of positive residue characteristic $p$ and $L|K$ is an extension of degree $p$. Denote again by $v$ the unique extension of the valuation of $K$ to $L$. Note that by equality~(\ref{Ostrowski}), the extension $(L|K,v)$ has nontrivial defect if and only if the corresponding value group and residue field extensions of $(L|K,v)$  are trivial, that is, $(L|K,v)$  is {\bf immediate}. 
Recall that if the field $K$ is itself of positive characteristic $p$, then every Galois extension of degree $p$ is an  Artin-Schreier extension, that is, an extension generated by a root  of a polynomial $X^p-X-c$ for some $c\in K$. 
Assume  now that char$K=0$. In this case assume additionally that $K$ contains a primitive $p$-th root of unity $\zeta_p$. Then every Galois extension of degree $p$ is a  Kummer extension, that is, an extension generated by a root  of a  polynomial $X^p-a$ for some $a\in K$.

\subsection{Tame fields}
An algebraic extension $(L|K,v)$ of henselian valued fields is called \bfind{tame} if every finite subextension $E|K$ of the extension $L|K$ satisfies the following three conditions:
\sn 
{\bf (T1)} if $p=$char$Kv>0$, then  the ramification index $(vE:vK)$ is prime to $p$,
\sn
{\bf (T2)} the residue field extension $Ev|Kv$ is separable,
\sn
{\bf (T3)} $(E|K,v)$ is a defectless extension.
\sn

As in the case of defectless extensions, every algebraic extension of a henselian valued field of residue characteristic zero is tame. 

A henselian valued field $(K,v)$ is said to be \bfind{tame} if the algebraic closure $\tilde{K}$ of $K$ together with the unique extension of $v$ is a tame extension of $(K,v)$.  If $p$ is the residue characteristic of $K$, then directly from the above definition of a tame extension it follows that $(K,v)$ is tame if and only the conjunction of the following  conditions hold
\sn 
{\bf (TF1)} if $p>0$ then the value group $vK$ is $p$-divisible,
\sn
{\bf (TF2)} the residue field $Kv$ is perfect,
\sn
{\bf (TF3)} $(K,v)$ is a defectless field.

Take a valued field $(K,v)$ and fix an extension of $v$ from $K$ to its algebraic closure
$\tilde{K}$. The {\bf absolute ramification field of $(K,v)$} is the ramification field of the normal extension 
$(K\sep|K,v)$. More precisely, it is the fixed field of the closed subgroup 
\[
G^r:=\{\sigma \in \Gal(K^{sep}|K) \mid v(\sigma a-a)>va \textrm{ for all }
 a\in \mathcal{O}_{K^{sep}}\setminus\{0\}\}  
\]
of Gal$(K^{sep}|K)$ (cf.~Corollary 20.6 of \cite{[E]}). The absolute ramification field is denoted by   $(K,v)^r$,or by $K^r$ if $v$ is fixed. 
 If $(K,v)$ is henselian, then the absolute ramification field $K^r$ is a Galois extension of $K$. It is also the unique maximal tame extension of $(K,v)$ (see Theorem 20.10 of~\cite{[E]} and Proposition 4.1 of~\cite{[KPR]}). This shows that every tame extension of valued fields is separable algebraic. Moreover,  $(K,v)$ is a tame field if and only if $K^r=\widetilde{K}$. We obtain also the following property (cf. Lemma 2.13 of \cite{[KTF]}).
 
\begin{lemma}	\label{subext_tame}
Take an algebraic extension $(L|K,v)$ of henselian valued fields and an intermediate field $E$. Then the extension $(L|K,v)$ is tame if and only if $(L|E,v)$ and $(E|K,v)$ are tame.
\end{lemma}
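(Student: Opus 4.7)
The plan is to deduce both directions by translating ``tame'' into ``contained in the absolute ramification field'', and then doing a small Galois-theoretic computation comparing $G^r$ over $K$ and over $E$.

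First I would record the two tools from the excerpt that will do most of the work. Since $(K,v)$ is henselian, the cited results (Theorem~20.10 of~\cite{[E]} and Proposition~4.1 of~\cite{[KPR]}) give the characterization that $(L|K,v)$ is tame if and only if $L\subseteq K^r$. Moreover, every tame extension is separable, so under either of our hypotheses we may place everything inside $K^{\rm sep}$: if $(L|K,v)$ is tame then $L\subseteq K^{\rm sep}$, and if $(E|K,v)$ and $(L|E,v)$ are both tame then $E\subseteq K^{\rm sep}$ and $L\subseteq E^{\rm sep}=K^{\rm sep}$. Thus in all cases $E|K$ is separable algebraic and $\Gal(K^{\rm sep}|E)$ is a (closed) subgroup of $\Gal(K^{\rm sep}|K)$.

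Next I would establish the key comparison between ramification groups. Writing $G^r=G^r(K)$ for the group defined in the excerpt and $H^r=G^r(E)$ for its analogue with $K$ replaced by $E$, the defining condition $v(\sigma a-a)>va$ for all nonzero $a\in\mathcal O_{K^{\rm sep}}$ is formulated intrinsically in $K^{\rm sep}$ and does not refer to the base field. Hence a straightforward inspection of the definitions yields
\[
H^r\>=\>G^r\cap \Gal(K^{\rm sep}|E).
\]
Taking fixed fields and using $E\subseteq K^{\rm sep}$ separable, this gives $K^r\subseteq E^r$ unconditionally. I would then prove the reverse inclusion \emph{under the hypothesis $E\subseteq K^r$}: any $\sigma\in G^r$ fixes $K^r$ pointwise, hence fixes $E$ pointwise, hence lies in $\Gal(K^{\rm sep}|E)$, and combined with $\sigma\in G^r$ this forces $\sigma\in H^r$. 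Thus $G^r\subseteq H^r$, so $E^r\subseteq K^r$ and in fact $K^r=E^r$ whenever $E\subseteq K^r$.

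With these tools the two directions become one-liners. For the forward direction, assume $L\subseteq K^r$. Then $E\subseteq L\subseteq K^r$, so $(E|K,v)$ is tame, and by the unconditional inclusion $K^r\subseteq E^r$ we also get $L\subseteq K^r\subseteq E^r$, so $(L|E,v)$ is tame. For the backward direction, assume $E\subseteq K^r$ and $L\subseteq E^r$; the equality $E^r=K^r$ that we just established under $E\subseteq K^r$ gives $L\subseteq E^r=K^r$, so $(L|K,v)$ is tame.

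The only place where I expect any friction is verifying the identity $H^r=G^r\cap\Gal(K^{\rm sep}|E)$, because it requires that the unique extension of $v$ from $K$ to $K^{\rm sep}$ really does restrict to the unique extension of $v$ from $E$ to $K^{\rm sep}=E^{\rm sep}$ (so that the same valuation ring $\mathcal O_{K^{\rm sep}}$ appears in both conditions) and that the ``$>va$'' condition is insensitive to the base field. Both points are immediate from $(K,v)$ being henselian and from the intrinsic formulation of the ramification condition, so this obstacle is very mild; the substantive content of the lemma is really the translation into the language of $K^r$ provided by the results cited above.
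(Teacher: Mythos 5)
Your proof is correct. The paper does not actually prove this lemma --- it only cites Lemma~2.13 of \cite{[KTF]} --- but your argument is the standard one and uses exactly the machinery the paper itself sets up: the characterization of tame extensions as the subextensions of the absolute ramification field, together with the identity $E^r=E.K^r$ (the paper's Lemma~\ref{ram_field_ext}), which you re-derive from the intrinsic definition of $G^r$ via the observation $G^r(E)=G^r(K)\cap\Gal(K^{\rm sep}|E)$. The one point worth flagging is that the forward implication for $(L|E,v)$ genuinely needs this ramification-field detour (the naive definition-chase fails when $E|K$ is infinite, since finite subextensions of $L|E$ need not be finite over $K$), and you handle that correctly.
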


\begin{theorem}		\label{tame_valgp_res}
Take a valued field $(K,v)$ of residue characteristic exponent $p$ and fix an extension of $v$ to $K^{sep}$. 
Then $vK^r$ is the $p'$-divisible hull of $vK$ and $K^rv$ is the separable algebraic closure of $Kv$. 

Moreover, every finite extension of $K^r$ is a tower of normal extensions of degree~$p$.
\end{theorem}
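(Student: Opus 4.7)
The plan is to reduce to the henselian case, establish the value-group and residue-field statements by a pair of matching inclusions each, and deduce the tower decomposition from the pro-$p$ structure of $\Gal(K\sep|K^r)$. As a first step, I would pass to the henselization: $(K^h|K,v)$ is immediate and contained in the chosen $(K\sep,v)$, so $vK$, $Kv$, and $K^r$ all remain unchanged; working inside the henselian $(K^h,v)$, the field $K^r$ is the union of the finite tame subextensions of $K\sep|K^h$, each satisfying (T1)--(T3).

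For the value group, I would prove constructively that every element of the $p'$-divisible hull of $vK$ lies in $vK^r$: given $\alpha = va \in vK$ and $n$ coprime to $p$, adjoining a root $b$ of $X^n - a$ gives $vb = \alpha/n$, and the Lemma of Ostrowski~(\ref{Ostrowski}) forces $(K(b)|K,v)$ to be defectless since $[K(b):K] \leq n$ is coprime to $p$, with residue extension of degree dividing $n$ and hence separable; so $(K(b)|K,v)$ is tame and $b \in K^r$. The reverse inclusion follows from (T1): any $\gamma \in vK^r$ lives in $vE$ for some finite tame $E|K$ with $(vE:vK)$ coprime to $p$, so $n\gamma \in vK$ for some such $n$. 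The residue-field argument is parallel: for $\bar b \in (Kv)\sep$ with separable minimal polynomial $\bar f$ of degree $d$, a monic lift $f \in \mathcal{O}_K[X]$ has, by henselianness, a root $b \in K\sep$ whose residue is $\bar b$; a degree count via the fundamental inequality shows that $K(b)|K$ is unramified of degree $d$ with $K(b)v = Kv(\bar b)$, hence tame, so $\bar b \in K^rv$. The opposite inclusion is immediate from (T2).

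For the tower statement I would invoke the classical fact from ramification theory that $\Gal(K\sep|K^r)$ is a pro-$p$ group, whence every finite Galois extension of $K^r$ inside $K\sep$ has $p$-power degree. Subnormal chains with order-$p$ factors in finite $p$-groups, read through the Galois correspondence, produce the required tower of normal degree-$p$ subextensions. For a general finite $L|K^r$, let $L^s$ be the separable closure of $K^r$ in $L$ and $L'$ the Galois closure of $L^s$ over $K^r$; refining the subgroup inclusion $\Gal(L'|L^s) \leq \Gal(L'|K^r)$ into index-$p$ subnormal steps and taking fixed fields yields the tower from $K^r$ to $L^s$, while the purely inseparable part $L|L^s$ is handled by the standard decomposition into successive adjunctions of $p$-th roots, each normal of degree $p$.

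The main obstacle I anticipate is the pro-$p$ property of $\Gal(K\sep|K^r)$, a classical but nontrivial fact from ramification theory (cf.~Chapter~20 of \cite{[E]}); once granted, the tower decomposition follows from routine group-theoretic arguments for $p$-groups together with the standard treatment of purely inseparable extensions. An intrinsic shortcut, avoiding explicit invocation, would proceed by observing that any nontrivial tame subextension of a finite Galois extension of $K^r$ must enlarge either $vK^r$ or $K^rv$, contradicting the equalities just proved; so such extensions are totally wild, and one extracts the prime-$p$ bound on the degree from the resulting maximality.
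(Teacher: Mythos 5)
The paper itself offers no proof of this theorem; it simply cites equation (2.2) and Lemma 2.9 of \cite{[Ku6]}. Your proposal supplies an actual argument, and its overall architecture is sound and standard: reduce to the henselian case, use the characterization of $K^r$ as the unique maximal tame extension to get both inclusions for $vK^r$ and $K^rv$ (the easy directions from (T1)/(T2), the hard directions by exhibiting explicit tame extensions realizing $\frac{1}{n}va$ resp.\ $\bar b$), and derive the tower statement from the classical fact that $\Gal(K\sep|K^r)$ is a pro-$p$ group together with the existence of subnormal index-$p$ refinements in finite $p$-groups and the usual decomposition of the purely inseparable part. The residue-field argument (irreducibility of the monic lift over a henselian field plus the fundamental inequality) and the group-theoretic part are correct. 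Since Kuhlmann's proof also ultimately rests on the ramification-theoretic structure of $\Gal(K\sep|K^r)$, your route is morally the same as the cited one, just written out; what it buys is self-containedness at the price of still having to import the pro-$p$ property as a black box.

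One step needs repair. For the value group you adjoin a root $b$ of $X^n-a$ for a general $n$ coprime to $p$ and assert that $[K(b):K]\leq n$ "is coprime to $p$", from which you get defectlessness via Ostrowski and tameness. But $[K(b):K]$ need not be coprime to $p$: if $X^n-a$ is reducible, a root $b$ may generate a cyclotomic-type extension whose degree divides $\varphi(n)$ rather than $n$ (e.g.\ $p=2$, $n=3$, $a\in K^3$, $b=\zeta_3 a^{1/3}$ gives $[K(b):K]=2$), so the Ostrowski argument as stated does not apply. The conclusion is still true, but you should either (i) argue prime by prime: for a prime $q\neq p$ and $a\in E\subseteq K^r$, the polynomial $X^q-a$ is irreducible over $E$ or has a root in $E$, and in the first case $[E(b):E]=q$ is genuinely coprime to $p$, so $E(b)|E$ is tame and $\frac{1}{q}va\in vK^r$ by Lemma~\ref{subext_tame}; iterating over primes and over $K^r$ yields $p'$-divisibility of $vK^r$; or (ii) first pass to $K(\zeta_n)$, which is an unramified hence tame extension, over which $[K(\zeta_n,b):K(\zeta_n)]$ does divide $n$, and then use transitivity of tameness. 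With either patch the proof goes through.
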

For the proof of the above theorem see equation~(2.2) and Lemma~2.9 of~\cite{[Ku6]} (cf. also Theorem 7.27 of \cite{[K5]}). A direct consequence of the above fact is that  
$v\tilde{K}$ is the $p$-divisible hull of $vK^r$ and $\tilde{K}v$ is the perfect hull of $vK^r$. 

\begin{lemma}	\label{ram_field_ext}
Assume that $(L|K,v)$ is an algebraic extension of henselian fields. Then $L^r=L.K^r$ with respect to the unique extension of $v$ to the algebraic closure of the fields. 
\end{lemma}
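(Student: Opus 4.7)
The plan is to establish the equality $L^r = L \cdot K^r$ by proving the two inclusions. For $L \cdot K^r \subseteq L^r$, I would show that $L \cdot K^r$ is a tame extension of $L$; since $L^r$ is the unique maximal tame extension of $L$, the inclusion follows. Any finite subextension of $L \cdot K^r \mid L$ sits inside some $L \cdot F$ with $F \mid K$ a finite tame subextension of $K^r \mid K$, so by Lemma~\ref{subext_tame} it suffices to check that $L \cdot F \mid L$ is tame whenever $F \mid K$ is tame. This amounts to verifying that the three tameness conditions (ramification index prime to $p$, residue extension separable, defectlessness) are preserved under base change to the algebraic extension $L$ of the henselian field $K$.

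For the reverse inclusion $L^r \subseteq L \cdot K^r$, I would work with the absolute ramification group. In the case where $L \mid K$ is separable, so that $L \subseteq K\sep$ and $L\sep = K\sep$, the very definition of the absolute ramification group gives
\[
G^r_L = G^r \cap \Gal(K\sep|L),
\]
where $G^r = G^r_K$. By the Galois correspondence for $K\sep \mid K$, the fixed field of this intersection is the compositum of the fixed fields of $G^r$ and of $\Gal(K\sep|L)$, namely $K^r$ and $L$; hence $L^r = L \cdot K^r$.

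For the general algebraic case set $L^0 := L \cap K\sep$, so that $L^0 \mid K$ is separable and $L \mid L^0$ is purely inseparable; in particular $\chara K = p > 0$ if $L^0 \neq L$. The separable case applied to $L^0 \mid K$ yields $(L^0)^r = L^0 \cdot K^r$, so it suffices to prove $L^r = L \cdot (L^0)^r$. Since $\tilde K \mid K\sep$ is purely inseparable we have $L\sep = L \cdot K\sep$, and restriction gives an isomorphism $\Gal(L\sep|L) \cong \Gal(K\sep|L^0)$. For $a \in L\sep$ with $a^{p^n} \in K\sep$, the Frobenius identity $(\sigma a - a)^{p^n} = \sigma(a^{p^n}) - a^{p^n}$ shows that $\sigma$ satisfies the defining inequality of $G^r_L$ at $a$ if and only if its restriction satisfies it at $a^{p^n}$, so the above isomorphism identifies $G^r_L$ with $G^r_{L^0}$. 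The Galois correspondence for $L\sep \mid L$ then gives $L^r = L \cdot (L^0)^r = L \cdot K^r$.

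The step I expect to be the main obstacle is the base change in the first direction: primality of the ramification index and separability of the residue extension are relatively formal, but preservation of defectlessness when passing from $F \mid K$ tame to $L \cdot F \mid L$ is not, and would require a proper argument, for instance via Theorem~\ref{tame_valgp_res} together with the Abhyankar inequality to rule out new defect.
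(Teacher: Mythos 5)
The paper does not actually prove this lemma; it cites Lemma~2.7 of \cite{[Ku6]}, whose proof is essentially the Galois-theoretic computation you give, so your route is the standard one and it is correct. Two remarks. First, your ``reverse inclusion'' argument already establishes the full equality: in the separable case the identity $G^r_L=G^r_K\cap\Gal(K\sep|L)$ (immediate from the paper's definition of $G^r$, since $L\sep=K\sep$ and the defining condition on $a\in\mathcal{O}_{K\sep}$ is the same for both fields) together with the fact that the fixed field of an intersection of closed subgroups is the compositum of their fixed fields gives $L^r=L.K^r$ outright; and your Frobenius argument correctly transports this across the purely inseparable step $L|L^0$. Hence your first paragraph is redundant, and the step you flag as the main obstacle --- showing that defectlessness of a tame $F|K$ is preserved when passing to $L.F|L$ --- never needs to be confronted. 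That is fortunate, because proving that base-change statement directly would be essentially circular: the cleanest way to see that tame extensions are preserved under base change over henselian fields is precisely the ramification-group identity you use in the second half (compare Proposition~\ref{defecless_ramfield}, which is itself derived from this lemma in \cite{[Ku6]}). So I would advise simply deleting the first paragraph and presenting the group-theoretic argument as the whole proof.
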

For the proof see Lemma~2.7 of~\cite{[Ku6]}.

A well known property of tame fields is the following fact.
\begin{proposition} \label{tame_impl_def}
If $(K,v)$ is a tame field, then every algebraic extension of $(K,v)$ is a tame and consequently a defectless field. 
\end{proposition}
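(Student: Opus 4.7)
The plan is to use the characterization of a tame field in terms of its absolute ramification field, namely that $(K,v)$ is tame if and only if $K^r = \tilde{K}$, together with Lemma \ref{ram_field_ext} describing how the absolute ramification field behaves under algebraic extensions.

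First I would fix an algebraic extension $(L|K,v)$ and observe that, because $(K,v)$ is henselian and $L|K$ is algebraic, $(L,v)$ is again henselian, so it makes sense to speak of $L^r$. Since $L|K$ is algebraic, we have $\tilde{L} = \tilde{K}$ inside a fixed algebraic closure of $K$. Using the hypothesis that $(K,v)$ is tame, the characterization $K^r = \tilde{K}$ applies. Then by Lemma \ref{ram_field_ext},
\[
L^r \;=\; L \cdot K^r \;=\; L \cdot \tilde{K} \;=\; \tilde{K} \;=\; \tilde{L},
\]
which by the same characterization means that $(L,v)$ is a tame field. In particular, condition \textbf{(TF3)} in the definition of a tame field gives that $(L,v)$ is defectless.

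I do not anticipate a genuine obstacle here: the whole statement is essentially a formal consequence of the definition of tame field and the two preparatory lemmas already recorded in the preliminaries. The only small point worth being careful about is distinguishing the case where $L|K$ is infinite, but since tameness of a field is defined via $K^r = \tilde{K}$ rather than through a single finite extension, the argument above works uniformly in finite and infinite algebraic extensions. Thus the proposition reduces to a one-line computation of ramification fields.
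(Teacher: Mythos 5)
Your proof is correct and follows exactly the paper's own argument: both apply the characterization $(K,v)$ tame $\Leftrightarrow$ $K^r=\tilde K$ together with Lemma~\ref{ram_field_ext} to compute $L^r=L.K^r=L.\tilde K=\tilde L$, and then invoke (TF3). No issues.
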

\begin{proof}
Note first that since $(K,v)$ is henselian, the valuation extends in a unique way to any algebraic extension of $K$. Denote this extension again by $v$.

Assume that $(K,v)$ is a tame field.  Then $K^r=\tilde{K}$. Take an algebraic extension $L$ of $K$. Then  Lemma~\ref{ram_field_ext} yields that
\[ L^r=K^r.L=\tilde{K}.L=\tilde{L}.\]
Hence $(L,v)$ is a tame field and consequently it is defectless. 
\end{proof}

Another strong property of tame extensions is that the defect is invariant under lifting up through such extensions. For the proof of the following fact see Proposition~2.8 of~\cite{[Ku6]}.  
\begin{proposition} \label{defecless_ramfield}
Take a henselian field $(K,v)$ and a tame extension $(N|K,v)$. Assume that $(L,v)$ is a finite extension of $(K,v)$. Then $d(L|K,v)=d(L.N|N,v)$. 

Moreover, the valued field $(K,v)$ is defectless if and only if $(N,v)$ is defectless.  In particular, $(K,v)$ is defectless if and only if its absolute ramifiaction field $(K^r,v)$ is a defectless field. 
\end{proposition}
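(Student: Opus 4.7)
For the finite tame case, I would use Lemma~\ref{ram_field_ext}: since $L^r = L\cdot K^r$ and $N\subseteq K^r$, the compositum $LN$ lies inside $L^r$, and by Lemma~\ref{subext_tame} the extension $(LN|L,v)$ is tame, hence defectless, so $d(LN|L,v)=1$. Since $N|K$ is finite tame, $d(N|K,v)=1$ as well. Multiplicativity of the defect along the two towers $K\subseteq L\subseteq LN$ and $K\subseteq N\subseteq LN$ then yields
\[
d(L|K,v) \;=\; d(LN|L,v)\cdot d(L|K,v) \;=\; d(LN|K,v) \;=\; d(LN|N,v)\cdot d(N|K,v) \;=\; d(LN|N,v).
\]

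For possibly infinite tame $N|K$, the extension $LN|N$ is still finite, and $N$ is the directed union of its finite subextensions $M|K$, each tame by Lemma~\ref{subext_tame}. Consequently $vN$, $Nv$, $v(LN)$, $(LN)v$ are directed unions over the $M$, and since $[LN:N]$, $(v(LN):vN)$, $[(LN)v:Nv]$ are all finite, one can enlarge $M$ sufficiently so that $[LM:M]=[LN:N]$ (first by including in $LM$ an $N$-basis of $LN$, then replacing $M$ by $LM\cap N$ to force $LM\cdot N=LN$ and $LM\cap N=M$) and so that the natural surjections $v(LM)/vM\twoheadrightarrow v(LN)/vN$ and $(LM)v/Mv\twoheadrightarrow (LN)v/Nv$ are also injective, which can be arranged by finitely many further enlargements to absorb elements of $N$ or of $Nv$ that kill coset representatives. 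Ostrowski's formula~(\ref{Ostrowski}) applied to the two finite extensions $LM|M$ and $LN|N$ then forces $d(LM|M,v)=d(LN|N,v)$, and the finite case already proved gives $d(LM|M,v)=d(L|K,v)$.

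For the equivalence of defectlessness, the direction $(N,v)$ defectless $\Rightarrow (K,v)$ defectless is immediate from the defect equality applied to each finite $L|K$. Conversely, if $(K,v)$ is defectless and $L'|N$ is a finite extension with generators $\alpha_1,\ldots,\alpha_n$, I would pick a finite subextension $M|K$ of $N|K$ containing all coefficients of the minimal polynomials of the $\alpha_i$ over $N$; then $L_0:=M(\alpha_1,\ldots,\alpha_n)$ is finite over $K$ with $L'=L_0\cdot N$, and the defect equality yields $d(L'|N,v)=d(L_0|K,v)=1$. The ``in particular'' clause is the specialization to $N=K^r$, which is tame over $K$ by definition.

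The main obstacle is the stabilization argument in the infinite case: one must check that the finite arithmetic invariants of $LN|N$ are already attained by some finite tame subextension of $N|K$, which requires careful tracking of compositum value groups, residue fields, and the kernels of the natural restriction maps.
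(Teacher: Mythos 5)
The paper does not actually prove this proposition: it is quoted from Proposition~2.8 of~\cite{[Ku6]}, where the essential case is $N=K^r$, handled by ramification theory, and the case of a general tame $N$ is then deduced using $N^r=N.K^r=K^r$ (Lemma~\ref{ram_field_ext}). Your treatment of the finite case (via $LN\subseteq L.K^r=L^r$, Lemma~\ref{subext_tame}, and multiplicativity of the defect) is correct, and the two ``moreover'' clauses follow correctly once the defect identity is granted. But the finite case is the easy part; the content of the proposition lies in the passage to infinite $N$, and that is exactly where your argument has a genuine gap.

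The stabilization step does not work as stated. Absorbing into $M$ the finitely many elements of $N$ (resp.\ of $Nv$) that witness non-injectivity of $v(LM)/vM\to v(LN)/vN$ (resp.\ non-injectivity on the residue side) produces a larger field $M'$, but then $v(LM')$ and $(LM')v$ are larger as well and may contain new kernel elements, so nothing guarantees that finitely many enlargements suffice. The paper's own Example~\ref{valgp_eqal} exhibits precisely this phenomenon: with $N=F=\bigcup_n K(a_n)$ and $L=K(\vartheta)$ one has $[K(a_n,\vartheta):K(a_n)]=[F(\vartheta):F]=p$, trivial residue field extensions throughout, $d(K(a_n,\vartheta)|K(a_n),v)=1=d(L|K,v)$ for every $n$, and yet the natural surjection $vK(a_n,\vartheta)/vK(a_n)\to vF(\vartheta)/vF$ maps a group of order $p$ onto the trivial group for every $n$; injectivity is never attained and $d(F(\vartheta)|F,v)=p$. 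That particular $F|K$ is purely inseparable rather than tame, so it does not contradict the proposition --- but your stabilization step uses the tameness of $N|K$ only through the constancy of $d(LM|M,v)$, which holds in that example too. Hence the argument as written would prove a false statement: the tameness of $N|K$ must enter at exactly the point you pass over, and it cannot be supplied by Ostrowski's formula and degree counting alone. To repair the proof you would have to establish the case $N=K^r$ by genuinely ramification-theoretic means, as in~\cite{[Ku6]}.
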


An  extension $(L,v)$ of a henselian valued field $(K,v)$ is called \bfind{purely wild} if it satisfies  the following  conditions:
\sn 
{\bf (P1)} $vL/vK$ is a $p$-group, where $p$ is the residue characteristic exponent of $(K,v)$
\sn
{\bf (P2)} the residue field extension $Lv|Kv$ is purely inseparable.
\sn
In particular every purely inseparable extension is purely wild.
The following characterization of algebraic purely wild extensions was proved in~\cite{[KPR]}, Lemma~4.2.
\begin{theorem}  %spr czy w ogÃ³le potrzebne!!!
An algebraic extension $(L|K,v)$ of henselian valued fields is purely wild if and only if $L|K$  is linearly disjoint from $K^r|K$.
\end{theorem}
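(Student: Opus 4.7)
The plan is to prove both implications separately. Since $K^r|K$ is Galois, linear disjointness of $L$ and $K^r$ over $K$ is equivalent to $L \cap K^r = K$, and I would work with the latter formulation throughout.

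For the ``only if'' direction, assume $(L|K,v)$ is purely wild and take any $\alpha \in L \cap K^r$. The extension $(K(\alpha)|K,v)$ is a subextension of both the purely wild $(L|K,v)$ and the tame $(K^r|K,v)$, so it satisfies (T1)+(P1) and (T2)+(P2) simultaneously. The first pair forces $vK(\alpha) = vK$, as the finite abelian group $vK(\alpha)/vK$ is simultaneously of order coprime to $p$ and a $p$-group; the second pair forces $K(\alpha)v = Kv$, since a separable and purely inseparable extension is trivial. Combining with (T3) and the Ostrowski equality yields $[K(\alpha):K] = 1$, so $\alpha \in K$.

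For the ``if'' direction, assume $L \cap K^r = K$ and fix a finite subextension $(E|K,v)$ of $(L|K,v)$, so that $E \cap K^r = K$ as well. My first step is to show $[E:K]$ is a power of $p$. Let $E^s$ be the separable closure of $K$ in $E$; then $E^s \cap K^r = K$, so $[E^s:K] = [E^s.K^r : K^r]$, which is a $p$-power by Theorem~\ref{tame_valgp_res}, while $[E:E^s]$ is a power of $\chara K$, hence of $p$. By the Ostrowski equality, $(vE:vK)$ and $[Ev:Kv]$ are then both $p$-powers. Property (P2) follows via a standard Hensel-lifting argument: any $\bar\alpha \in Ev$ separable over $Kv$ lifts to some $\alpha \in E$ such that $K(\alpha)|K$ is unramified, hence $K(\alpha) \subseteq K^{\mathrm{nr}} \subseteq K^r$, forcing $\alpha \in E \cap K^r = K$ and $\bar\alpha \in Kv$.

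The main obstacle is (P1), which I plan to prove by contradiction. Suppose some $\gamma \in vE$ has order $\ell$ in $vE/vK$ for a prime $\ell \ne p$; write $\gamma = v\beta$ with $\beta \in E^\times$ and pick $c \in K^\times$ with $vc = \ell\gamma$. Then $u := \beta^\ell/c \in \mathcal{O}_E^\times$, and by the already-established (P2) its residue is purely inseparable over $Kv$, so $u^{p^m} \equiv u_0 \pmod{\mathfrak{m}_E}$ for some $m \geq 0$ and $u_0 \in \mathcal{O}_K^\times$. Setting $c_1 := c^{p^m} u_0 \in K^\times$, one rewrites $\beta^{\ell p^m} = c_1(1+\epsilon)$ with $v\epsilon > 0$. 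Since $\ell$ is a unit in $\mathcal{O}_E$, Hensel's lemma gives $\eta \in E$ with $\eta^\ell = 1+\epsilon$ and $\bar\eta = 1$, and then $\zeta := \beta^{p^m}/\eta \in E$ satisfies $\zeta^\ell = c_1 \in K$. The splitting field of $X^\ell - c_1$ over $K$ is the compositum of the unramified extension $K(\mu_\ell)|K$ and a Kummer extension of degree dividing $\ell$ above it, hence tame by Lemma~\ref{subext_tame} and therefore contained in $K^r$. Thus $\zeta \in K^r$ and consequently $\zeta \in E \cap K^r = K$, which forces $p^m \gamma = v\zeta \in vK$ --- contradicting $\gcd(p^m,\ell) = 1$ together with the order of $\gamma$ modulo $vK$. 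The delicate point is precisely this construction: (P2) is essential for absorbing the residue-level deviation of $\beta^\ell/c$ into an element of $K$ before an $\ell$-th root can be extracted via Hensel's lemma and shown to lie in the tame part.
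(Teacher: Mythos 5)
The paper itself gives no proof of this statement: it is quoted from \cite{[KPR]}, Lemma~4.2, so there is no in-paper argument to compare yours against. Judged on its own, your proof is correct and is essentially self-contained given the facts the paper records: the reduction of linear disjointness to $L\cap K^r=K$ via normality of $K^r|K$, the ``only if'' direction obtained by intersecting the tame conditions (via Lemma~\ref{subext_tame}) with the purely wild conditions (via Lemma~\ref{purelywild_subext}) on $K(\alpha)|K$ and then applying the Ostrowski equality, and the unramified-lifting argument for (P2) in the ``if'' direction are all sound.

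One structural remark: your treatment of (P1) is much more elaborate than necessary, and you have the location of the difficulty backwards. Once Step~1 gives that $[E:K]$ is a power of $p$ and you apply the Ostrowski equality to the unibranched extension $E|K$, the index $(vE:vK)$ divides a power of $p$; hence the finite group $vE/vK$ has $p$-power order and is therefore a $p$-group. That is already (P1) for every finite subextension, and hence for $L|K$ by taking the union over finite subextensions. The entire Kummer/Hensel contradiction argument --- which is itself correct, and which in any case relies on having (P2) available first --- can simply be deleted. The genuinely nontrivial point in the ``if'' direction is (P2): a $p$-power residue degree does not by itself exclude a separable residue extension, and your argument lifting a separable residue element to a tame (hence $K^r$-contained) subextension is exactly the step that does real work.
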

We will also need the following easy observation.
\begin{lemma} 	\label{purelywild_subext}
Take a  subextension $(E|K,v)$ of a valued field extension $(L|K,v)$. Then  $(L|K,v)$ is purely wild if and only if $(E|K,v)$ and $(L|E,v)$ are purely wild. 
\end{lemma}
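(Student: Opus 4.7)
The claim breaks cleanly along the two defining conditions (P1) and (P2), so my plan is to verify each one separately, in both directions. Throughout we use that the restriction of $v$ to $E$ gives nested value groups $vK\subseteq vE\subseteq vL$ and nested residue fields $Kv\subseteq Ev\subseteq Lv$, so the conditions on $(E|K,v)$ and $(L|E,v)$ make sense.

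For condition (P1), I would exploit the short exact sequence of abelian groups
\[
0\;\longrightarrow\; vE/vK \;\longrightarrow\; vL/vK \;\longrightarrow\; vL/vE \;\longrightarrow\; 0,
\]
where the first map is induced by inclusion and the second by the quotient map $vL/vK\to vL/vE$. Being a $p$-group (in the sense that every element is annihilated by some power of~$p$) is preserved under passing to subgroups and quotients, so if $vL/vK$ is a $p$-group then so are $vE/vK$ and $vL/vE$. Conversely, any abelian group which is an extension of a $p$-group by a $p$-group is itself a $p$-group: given $\gamma+vK\in vL/vK$, its image in $vL/vE$ has order a $p$-power, say $p^m\gamma\in vE$, and then $p^m\gamma+vK\in vE/vK$ has order a $p$-power, so $p^{m+n}\gamma\in vK$ for suitable $n$. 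This gives (P1) for $(L|K,v)$.

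For condition (P2), I would invoke the standard tower property of purely inseparable extensions of fields: an algebraic extension $F_2|F_0$ is purely inseparable if and only if both $F_2|F_1$ and $F_1|F_0$ are purely inseparable, for any intermediate $F_1$. Applied to $Kv\subseteq Ev\subseteq Lv$, this gives the equivalence for (P2) directly. (If $Lv|Kv$ is purely inseparable, every element of $Lv$ is purely inseparable over $Kv$, hence over $Ev$, giving $Lv|Ev$ purely inseparable; and any element of $Ev\subseteq Lv$ is purely inseparable over $Kv$, giving $Ev|Kv$ purely inseparable. The converse is equally routine.)

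Combining both equivalences gives the lemma. There is essentially no obstacle here; the only point worth flagging is the convention on what ``$p$-group'' means for a possibly non-finitely-generated abelian quotient, which is why I would phrase the (P1) step explicitly in terms of every element being killed by a $p$-power rather than appealing to a black-box closure result.
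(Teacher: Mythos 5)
Your proof is correct. The paper itself gives no argument for this lemma (it is stated only as an ``easy observation''), and what you supply is exactly the expected verification: the equivalence for (P1) via the exact sequence $0\to vE/vK\to vL/vK\to vL/vE\to 0$ together with the fact that torsion $p$-groups are closed under subgroups, quotients and extensions, and the equivalence for (P2) via the tower property of purely inseparable field extensions. Your explicit element-chase for the extension step of (P1), and your care about what ``$p$-group'' means for a possibly infinite quotient, are both appropriate; the only trivial point left implicit is that the residue characteristic exponent is the same for $(K,v)$, $(E,v)$ and $(L,v)$ since $Kv\subseteq Ev\subseteq Lv$, so condition (P1) refers to the same prime throughout.
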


\subsection{Compositions of valuations}
Take a valued field $(K,v)$ 
and a valuation $\bar{v}$ of $Kv$. Then by $v\circ\bar{v}$ we denote the valuation of $K$ whose valuation ring is of the form
\[
\mathcal{O}_{v\circ\bar{v}}:=\{a\in \mathcal{O}_v : av\in \mathcal{O}_{\bar{v}}\}.
\]
We call $v\circ\bar{v}$ the {\bf composition of the valuations} $v$ and $\bar{v}$. Note that $v$ is a coarsening of $v\circ\bar{v}$. Write $w=v\circ\bar{v}$. The value group $\bar{v}(Kv)$ of $(Kv,\bar{v})$ is isomorphic to a convex subgroup $H$ of $wK$, and $vK\cong wK/H$ (cf. \cite{[EP]}, Section 2.3). 

\begin{lemma}  \label{coars_hens}
Take a valued field $(K,v)$ and a valuation $\bar{v}$ of $Kv$. Then $(K, v\circ\bar{v})$ is a henselian field if and only if $(K,v)$ and $(Kv,\bar{v})$ are henselian fields. 
\end{lemma}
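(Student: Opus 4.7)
The plan is to use the characterization of henselianity by uniqueness of extension to the algebraic closure: a valued field $(K,u)$ is henselian if and only if $u$ extends uniquely to $\tilde{K}$. Combined with the way a composed valuation interacts with algebraic extensions, this reduces the lemma to a counting argument.

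First I would recall (directly from the description of the valuation ring $\mathcal{O}_{v\circ\bar v} = \{a \in \mathcal{O}_v : av \in \mathcal{O}_{\bar v}\}$) the standard bijective correspondence: for any extension field $L$ of $K$, the extensions of $w := v \circ \bar{v}$ to $L$ are in one-to-one correspondence with pairs $(v', \bar{v}')$, where $v'$ is an extension of $v$ to $L$ and $\bar{v}'$ is an extension of $\bar{v}$ to the residue field $Lv'$. Under this bijection the extension of $w$ corresponding to the pair $(v', \bar{v}')$ is precisely the composition $v' \circ \bar{v}'$.

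Next I would apply this correspondence with $L = \tilde{K}$. Since $\tilde{K}$ is algebraically closed and algebraic over $K$, for any extension $v'$ of $v$ to $\tilde{K}$ the residue field $\tilde{K}v'$ is algebraically closed and algebraic over $Kv$, hence it is an algebraic closure $\widetilde{Kv}$ of $Kv$. Consequently, extensions of $\bar{v}$ to $\tilde{K}v'$ coincide with extensions of $\bar{v}$ to $\widetilde{Kv}$.

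Combining the two observations, $w$ has a unique extension to $\tilde{K}$ if and only if both of the following hold: there is a unique extension of $v$ to $\tilde{K}$, and for that unique extension $v'$ there is a unique extension of $\bar{v}$ to $\tilde{K}v' = \widetilde{Kv}$. By the recalled characterization, these are precisely the statements that $(K,v)$ and $(Kv,\bar{v})$ are henselian. For the converse direction, if either factor fails to be henselian, one directly exhibits two distinct extensions of $w$ to $\tilde{K}$ via the correspondence, showing that $(K,w)$ is not henselian either.

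The main obstacle is setting up the correspondence between extensions of a composed valuation and pairs of extensions of its factors with enough care to see that the natural residue map really identifies extensions of $\bar{v}$ on $Lv'$ with the ``upper part'' of extensions of $w$ on $L$; this is essentially bookkeeping between value groups (using that $\bar v(Kv)$ is isomorphic to the convex subgroup $H$ of $wK$ with $wK/H \cong vK$, as noted just before the lemma) and once formulated cleanly, the rest of the argument is immediate.
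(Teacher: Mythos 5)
Your argument is correct, and it is essentially the standard proof of this fact: the paper itself gives no proof, referring instead to Corollary~4.1.4 of Engler--Prestel, whose argument likewise rests on the characterization of henselianity by unique prolongation to $\tilde{K}$ together with the decomposition of extensions of a composite valuation. One small caveat: the correspondence between extensions of $w=v\circ\bar v$ to $L$ and pairs $(v',\bar v')$ is a genuine bijection only when $L|K$ is algebraic (for algebraic extensions the convex subgroup $H'$ of $w'L$ with $H'\cap wK=H$ is unique because $w'L/wK$ is torsion, which is what makes the decomposition of $w'$ over $v$ canonical; for transcendental extensions both injectivity and canonical surjectivity can fail), so you should state it in that generality --- but since you only apply it to $L=\tilde{K}$, the proof goes through unchanged.
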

For the proof of the above lemma see Corollary~4.1.4 of~\cite{[EP]}. The next property follows directly from Lemma~4.9 of \cite{[AJ]}. 
\begin{lemma}   \label{coars_def}
Take a henselian field $(K,v)$ and a henselian valuation $\bar{v}$ of $Kv$. Then $(K, v\circ\bar{v})$ is a defectless field if and only if $(K,v)$ and $(Kv,\bar{v})$ are defectless fields. 
\end{lemma}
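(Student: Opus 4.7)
The plan is to reduce the statement to the multiplicativity identity
\[
d(L|K, v\circ\bar{v}) \;=\; d(L|K,v)\cdot d(Lv|Kv,\bar{v})
\]
for every finite extension $L|K$, from which both directions of the equivalence fall out cleanly.

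To derive this identity, set $w=v\circ\bar{v}$. By Lemma~\ref{coars_hens} the field $(K,w)$ is henselian, so $w$ extends uniquely to $L$; similarly $v$ does, and because $Lv$ is a finite extension of the henselian field $(Kv,\bar{v})$ it is itself henselian, so $\bar{v}$ extends uniquely to $Lv$. I then apply the Lemma of Ostrowski~(\ref{Ostrowski}) to each of the three resulting finite unibranched extensions. Using the short exact sequence $0\to\bar{v}(Kv)\to wK\to vK\to 0$ together with its counterpart for $L$, plus the fact that $H_L\cap wK=H_K$ by convexity, one obtains $e(L|K,w)=e(L|K,v)\cdot e(Lv|Kv,\bar{v})$; together with $Kw=(Kv)\bar{v}$ this gives $f(L|K,w)=f(Lv|Kv,\bar{v})$, while of course $f(L|K,v)=[Lv:Kv]$. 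Substituting into $[L:K]=efd$ for $w$ and equating with the same formula for $v$ (after expanding $[Lv:Kv]$ via $efd$ for $\bar{v}$) yields the displayed formula.

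Given the formula, the backward implication is immediate: if $(K,v)$ and $(Kv,\bar{v})$ are both defectless, each factor on the right is $1$ for every finite $L|K$. For the forward implication, assume $(K,w)$ is defectless. Reading off $d(L|K,v)=1$ from the identity for every finite $L|K$ shows that $(K,v)$ is defectless. To deduce that $(Kv,\bar{v})$ is defectless, I will show that every finite extension $M|Kv$ embeds into $Lv$ for some finite $L|K$: invoking the identification $\tilde{K}v=\widetilde{Kv}$ (which follows from Theorem~\ref{tame_valgp_res} together with the remark that $\tilde{K}v$ is the perfect hull of $K^rv$), a finite generating set of $M$ over $Kv$ lifts elementwise to $\tilde{K}$, and taking $L$ to be the finite extension of $K$ generated by those lifts gives $M\subseteq Lv$. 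The multiplicativity of the defect in the tower $Kv\subseteq M\subseteq Lv$, combined with $d(Lv|Kv,\bar{v})=1$, then forces $d(M|Kv,\bar{v})=1$.

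The main obstacle is this last lifting step: realizing an arbitrary finite, possibly inseparable, extension $M$ of $Kv$ inside the residue field of some finite extension of $K$. This relies crucially on the henselianity of $(K,v)$ through the identification $\tilde{K}v=\widetilde{Kv}$; without it the formula would still hold but could not be converted into a statement about every finite extension of $Kv$. Everything else reduces to bookkeeping with the fundamental equality and the short exact sequence for value groups under composition of valuations.
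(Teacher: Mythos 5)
Your argument is correct, but it is worth noting that the paper does not actually prove this lemma at all: it simply cites Lemma~4.9 of \cite{[AJ]}. What you supply is the standard self-contained proof via the multiplicativity identity $d(L|K,v\circ\bar v)=d(L|K,v)\cdot d(Lv|Kv,\bar v)$, and all the ingredients check out. The exact sequence $0\to H_L/H_K\to wL/wK\to vL/vK\to 0$ (using that $H_K=\ker(wK\to vK)$ is exactly $H_L\cap wK$) gives the factorization of ramification indices, the identification $Kw=(Kv)\bar v$ gives the factorization of inertia degrees, and equating the two expressions for $[L:K]$ yields the identity; henselianity of $(K,v)$, $(Kv,\bar v)$ and hence (by Lemma~\ref{coars_hens}) of $(K,w)$ is exactly what makes all three extensions unibranched so that the Lemma of Ostrowski applies. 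The forward direction is the only place requiring an extra idea, and your lifting step is sound: $\tilde K v=\widetilde{Kv}$ (in fact this holds for any valued field once an extension of $v$ to $\tilde K$ is fixed, not only via the ramification-field route through Theorem~\ref{tame_valgp_res}), so any finite $M|Kv$ sits inside $Lv$ for a finite $L|K$ generated by lifts of generators of $M$, and multiplicativity of the defect in the tower $Kv\subseteq M\subseteq Lv$ finishes it. Two cosmetic remarks: the residue characteristic exponents of $v$ and of $w$ need not coincide (e.g.\ $\chara Kv=0$ but $\chara Kw=p$), but your derivation never uses that they do, since it only equates the three instances of $[L:K]=d\cdot e\cdot f$; and what you call the ``fundamental equality'' at the end is the Lemma of Ostrowski in the paper's terminology. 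Compared with the paper, your route buys a proof readable from first principles within the paper's own toolkit, at the cost of redoing bookkeeping that \cite{[AJ]} has already packaged.
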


\section{tame fields and defect extensions in positive characteristic}
We start this section with a proof of Theorem~\ref{charact_equal} for valued fields of equal characteristic. In the case of defectless fields of positive characteristic which are not tame we then give examples of constructions of algebraic extensions of these fields admitting defect extensions. This provides an alternative proof of the theorem.

{\bf Proof of Theorem~\ref{charact_equal}}[the equicharacteristic case].\\
Since $(K,v)$ is henselian, $v$ extends  in a unique way to $\tilde{K}$. Denote this extension again by $v$.

If $(K,v)$ is a tame field, then by Proposition~\ref{tame_impl_def} every algebraic extension of $(K,v)$ is a defectless field.

Note first that if char$Kv=0$, then every henselian field is tame, so it is enough to prove the theorem in the case of char$K=$char$Kv=p>0$.

Suppose now that $(K,v)$ is not a tame field. We show that then there is an algebraic extension of $(K,v)$, which is not a defectless field. Since $(K,v)$ is not tame, condition (TF1), (TF2) or (TF3) does not hold. If $(K,v)$ is not defectless, then the assertion is trivial. Hence we may assume that the value group of $(K,v)$ is not $p$-divisible or its residue field is not perfect.  
Take $K^r$ to be the absolute ramification field of $(K,v)$. If $vK$ is not $p$-divisible, then by Theorem~\ref{tame_valgp_res} also $vK^r$ is not $p$-divisible. Similarly, if $Kv$ is not perfect, then by Theorem~\ref{tame_valgp_res} also $K^rv$ is imperfect. 
Hence $K^r$ is not separable-algebraically closed. Take $L$ to be the perfect hull of $K^r$. Then $vL$ is the $p$-divisible hull of $vK^r$ and $Lv$ is the perfect hull of $K^rv$. Together with Theorem~\ref{tame_valgp_res} this implies that $vL=\widetilde{vK}$ and $Lv=\widetilde{Kv}$. Moreover, since $K^r$ is not separable-algebraically closed, the same is true for $L$. Hence there is a nontrivial finite extension of $(E|L,v)$. As $vL$ is divisible and $Lv$ is algebraically closed, $(vE:vL)=[Ev:Lv]=1$ and thus $(E|L,v)$ is a defect extension.
\QED

{\bf Throughout the remaining part of this section we assume that $(K,v)$ is a nontrivially valued field of positive characteristic $p$.}

Assume that $(K,v)$ is a henselian field whose value group is not $p$-divisible or whose  residue field is not perfect. In general, it is not necessary to pass to the perfect hull of the absolute ramification field of $(K,v)$ to obtain a defect extension. The following examples give precise constructions of such extensions, cf. also Example~3.9 of~\cite{[Ku1]}.

We start with the case of non-p-divisible value group. The following example is a generalization of Example~4.16  of~\cite{[Ku6]}. 
\begin{example}  \label{valgp_eqal}
Assume that $vK$ is not $p$-divisible. Fix an extension of $v$ to a valuation of $\tilde{K}$ and denote it again by $v$. Take an element $a_0\in K$  of negative value such that $\frac{1}{p}va_0\notin vK$. 

By induction on $i$ take $a_i\in \tilde{K}$ such that $a_i^p=a_{i-1}$. Since each element $a_i$, $i\in\N$, is purely inseparable over $K$, the valuation $v$ extends in a unique way to $F:=K(a_i | i\in\N)$. Denote this extension again by $v$. Note that $va_n=\frac{1}{p^n}va_0$. As $\frac{1}{p}va_0\notin vK$, by the fundamental inequality we obtain that
\[
p^n\leq (vK(a_n):vK)[K(a_n)v:Kv]\leq [K(a_n):K]\leq p^n
\]
for every $n\in\N$.
Hence equality holds everywhere. Thus in particular $(K(a_n)|K,v)$ is a unibranched extension. Moreover, $(vK(a_n):vK)=p^n$ and $[K(a_n)v:Kv]=1$, which implies that $vK(a_n)=vK+\frac{1}{p^n}va_0\Z$ and $K(a_n)v=K$. 
Since $F=\displaystyle\bigcup_{n\in\N}K(a_n)$, we deduce that $(F|K,v)$ is a unibranched extension with $vF=vK+\frac{1}{p^{\infty}}(\Z va_0)$ and $Fv=Kv$. 

Take a root $\vartheta$ of a polynomial 
\[
X^p-X-a_0.
\]
Since $va_0<0$, we have that $v\vartheta =\frac{1}{p}va_0<0$. We consider now the extensions $(K(a_n,\vartheta)|K(a_n))$. For every $n\in\N$ set
\[
b_n:=\vartheta -\sum_{i=1}^n a_i\in K(a_n,\vartheta).
\]
Since $\vartheta^p-\vartheta -a_0=0$, we obtain that
\[
0=b_n^p+\sum_{i=1}^n a_i^p- b_n-\sum_{i=1}^n a_i-a_0=b_n^p-b_n-a_n.
\]
As $va_n<0$, $vb_n=\frac{1}{p}va_n=\frac{1}{p^{n+1}}va_0\notin K(a_n)$.  This together with the fundamental inequality  yields that
\[
p\leq (vK(a_n,\vartheta):vK(a_n))[K(a_n,\vartheta)v:K(a_n)v]\leq [K(a_n,\vartheta):K(a_n)]\leq p.
\]
Hence equality holds everywhere and $(K(a_n,\vartheta)|K(a_n),v)$ is a nontrivial unibranched Artin-Schreier extension with 
$vK(a_n)=vK+\frac{1}{p^{n+1}}va_0\Z$  and $K(a_n,\vartheta)v=K(a_n)v$. 

Since $F(\vartheta)= \bigcup_{n\in\N}K(a_n,\vartheta)$, we obtain that $(F(\vartheta )|F,v)$ is a nontrivial unibranched Artin-Schreier extension with a trivial residue field extension. Moreover, as  $vF=vK+\frac{1}{p^{\infty}}(\Z va_0)$, also $vF(\vartheta)=vF$. Hence it is a defect extension. 
\EXend
\end{example}

We give also a similar construction in the case of $p$-divisible value group and imperfect residue field. We will need the following fact (cf.  Lemma 2.13 of~\cite{[K4]} and the construction given in the proof of the Lemma).
\begin{lemma}	\label{res_ASconstr}
Assume that the residue field of $(K,v)$ is imperfect. For an element  \mbox{$\xi \in (Kv)^{1/p}\setminus Kv$,} take  $c\in \mathcal{O}_K$ with $cv=\xi^p$ and $d\in K$ of negative value. Then a root $\vartheta$ of the polynomial $X^p-X-d^pc$ generates an Artin-Schreier extension of $(K,v)$ with $vK(\vartheta)=vK$ and $K(\vartheta)v=Kv(\xi)$.
\end{lemma}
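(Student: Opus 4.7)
The plan is to read off $v\vartheta$ and the residue of $\vartheta/d$ directly from the Artin--Schreier relation, and then to let the fundamental inequality do the rest. Fix an extension of $v$ to $\tilde{K}$, denoted again by $v$. Since $cv=\xi^p\neq 0$ we have $vc=0$, so $v(d^pc)=pvd<0$. The relation $\vartheta^p-\vartheta=d^pc$ then forces $v\vartheta<0$ (otherwise the left hand side would have nonnegative value), so $v(\vartheta^p)=pv\vartheta<v\vartheta$ and consequently $v(\vartheta^p-\vartheta)=pv\vartheta$. Comparing with $v(d^pc)=pvd$, we get $v\vartheta=vd$.

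Next I would rewrite the Artin--Schreier relation after dividing by $d^p$:
\[
(\vartheta/d)^p \>=\> \vartheta/d^p + c.
\]
Since $v(\vartheta/d^p)=vd-pvd=(1-p)vd>0$, reducing modulo the maximal ideal yields $\bigl((\vartheta/d)v\bigr)^p=cv=\xi^p$. In residue characteristic $p$ this forces $(\vartheta/d)v=\xi$, so $\xi\in K(\vartheta)v$. Hence $Kv(\xi)\subseteq K(\vartheta)v$ and $[K(\vartheta)v:Kv]\geq p$. In particular $K(\vartheta)\neq K$, and since $\vartheta$ satisfies a polynomial of degree $p$, we conclude $[K(\vartheta):K]=p$, so that $X^p-X-d^pc$ is in fact irreducible over $K$.

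Finally I would apply the fundamental inequality
\[
p \>=\> [K(\vartheta):K] \>\geq\> \sum_{i=1}^g (v_iK(\vartheta):vK)[K(\vartheta)v_i:Kv]
\]
taken over the extensions $v_1,\ldots,v_g$ of $v$ to $K(\vartheta)$. Our chosen extension alone contributes at least $p$ to the sum, so $g=1$ and every inequality above must be an equality. This gives $(vK(\vartheta):vK)=1$ and $[K(\vartheta)v:Kv]=p$, whence $vK(\vartheta)=vK$ and $K(\vartheta)v=Kv(\xi)$, as claimed.

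The only delicate point is the passage from $\bigl((\vartheta/d)v\bigr)^p=\xi^p$ to $(\vartheta/d)v=\xi$, which uses crucially that we are in residue characteristic $p$, so that the $p$-th power map on the residue field is injective. Once that is in hand, the remaining steps are a mechanical application of the fundamental inequality, and no henselian hypothesis on $(K,v)$ is needed.
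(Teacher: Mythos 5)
Your proof is correct and follows the same standard technique that the paper itself uses in its related constructions (Examples 3.2 and 3.3) and delegates here to Lemma 2.13 of [K4]: determine $v\vartheta$ from the Artin--Schreier relation, divide by $d^p$ to exhibit $\xi$ as the residue of $\vartheta/d$, and then force equality in the fundamental inequality. All the steps check out, including the key use of injectivity of the Frobenius on the residue field and the observation that the chosen extension already contributes $p$ to the sum, which yields unibranchedness.
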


\begin{example}   \label{resf_eqal}
Assume that $vK$ is $p$-divisible and $Kv$ is not a perfect field. 
%If $K$ contains an element $a$ of negative value and such that $a^{1/p^n}\in K$ for all $n\in\N$, set $a_0=a$. Otherwise, p
Pick some element $a_0\in K$ with $va_0<0$. By induction on $i$ take $a_i\in \tilde{K}$ such that $a_i^p=a_{i-1}$ and set $K_0:=K(a_i | i\in\N)$. Note that if $K$ contains an element $a$ of negative value and such that $a^{1/p^n}\in K$ for all $n\in\N$ we can choose $a_0=a$ and $K_0=K$. 

Since $(K_0|K,v)$ is a purely inseparable (possibly trivial) extension, it is unibranched. We consider now two possibilities: $K_0v$ perfect or $K_0v$ an imperfect field. Note that the latter holds in particular if $K_0=K$. 

Assume first that $K_0v$  is a perfect field. Since $Kv$ is not perfect, there is some $\xi \in (Kv)^{1/p}\setminus Kv$. Take $c\in \mathcal{O}_K$ with $cv=\xi^p$ and $d\in K$ with $vd<0$. Let $\vartheta$ be a root of the polynomial 
$X^p-X-d^pc$. Then by Lemma~\ref{res_ASconstr}, $(K(\vartheta)|K,v)$ is a nontrivial Artin-Schreier extension with $vK(\vartheta)=vK$ and $K(\vartheta)v=Kv(\xi)$. Hence in particular $(K(\vartheta)|K,v)$ is a unibranched extension. Since $K_0|K$ is a purely inseparable extension, $(K_0(\vartheta)|K_0,v)$ remains a nontrivial Artin-Schreier extension. 

Since the extension $K_0(\vartheta)|K(\vartheta)$ is purely inseparable, it is unibranched and purely wild. As $K(\vartheta)v|Kv$ is a purely inseparable extension of degree equal to $[K(\vartheta):K]$ , also $(K(\vartheta)|K,v)$ is a unibranched purely wild extension. By Lemma~\ref{purelywild_subext} this yields that $(K_0(\vartheta)|K,v)$ is a unibranched purely wild extension and consequently the same holds for $(K_0(\vartheta)|K_0,v)$.  Note that $vK_0$ is $p$-divisible, as this holds already for $vK$. Also the field $K_0v$  is perfect by assumption. Together with the fact that  $(K_0(\vartheta)|K_0,v)$ is purely wild, this yields that the corresponding value group and residue field extensions are trivial and thus $(K_0(\vartheta)|K_0,v)$ is an Artin-Schreier defect extension. 

Assume now that the field $K_0v$  is imperfect. Take $\xi \in (Kv)^{1/p}\setminus Kv$ and take $c_0\in \mathcal{O}_K$ with $c_0v=\xi^p$. By induction on $i$ take $c_i\in \tilde{K}$ such that $c_i^p=c_{i-1}$ and set \[F:=K_0(c_i | i\in\N).\] Since for each $n\in\N$ we have that $c_nv=\xi^{1/p^{n-1}}$ and
\[
p^n\leq [K_0(c_n)v:K_0v]\leq [K_0(c_n):K_0]\leq p^n,
\]
similarly to Example~\ref{valgp_eqal} we deduce that $(K_0(c_n)|K_0,v)$ is a unibranched extension of degree $p^n$ with 
$K_0(c_n)v=K_0v(\xi^{1/p^{n-1}})$, $vK_0(c_n)=vK_0=vK$. Thus $vF=vK$ and $Fv=K_0v(\xi^{1/p^n}| n\in\N)$. 

Take a root $\vartheta$ of a polynomial 
\[
X^p-X-a_0c_0.
\]
For every $n\in\N$ set
\[
b_n=\vartheta-\sum_{i=1}^na_ic_i\in K_0(c_n,\vartheta).
\]
Note that
\begin{equation} \label{eq1}
b_n^p-b_n=\vartheta^p-\sum_{i=1}^n(a_ic_i)^p-\vartheta+\sum_{i=1}^na_ic_i =a_nc_n.
\end{equation}
Since $v(a_nc_n)<0$, we have that $vb_n=\frac{1}{p}v(a_nc_n)= \frac{1}{p}va_n>va_n$. Together with equation~(\ref{eq1}) and the choice of $a_n$ this shows that 
\[
c_n=\frac{b_n^p}{a_n}- \frac{b_n}{a_n}= \left(\frac{b_n}{a_{n+1}}\right)^p-\frac{b_n}{a_n}
\]
is a unit with 
\[
c_nv=\frac{b_n^p}{a_n}v= \left(\frac{b_n}{a_{n+1}}v\right)^p.
\]
Consequently, $\frac{b_n}{a_{n+1}}v=\xi^{1/p^{n}}\notin K_0(c_n)v. $ Similarly to the previous examples we deduce that $(K_0(c_n,\vartheta)|K_0(c_n),v)$ is a unibranched Artin-Schreier extension with $vK_0(c_n,\vartheta)=vK$ and $K_0(c_n,\vartheta)v=K_0v(\xi^{1/p^{n}})$. Therefore $F(\vartheta)|F,v)$ is a nontrivial unibranched Artin-Schreier extension which is immediate. Hence it is an Artin-Schreier defect extension. 
\EXend
\end{example}

\begin{remark} \label{remark}
Note that the valued fields $(K,v)$ considered in Examples~\ref{valgp_eqal} and~\ref{resf_eqal} may be chosen to be defectless. Indeed, if $(K',v)$ is a maximal immediate extension of $(K,v)$, then $vK'=vK$, $K'v=Kv$ and the field $(K',v)$ is defectless (by Theorem~31.21 of~\cite{[W2]}). However, they admit infinite algebraic extensions which are not defectless fields. 
\end{remark}
\section{tame fields and defect extensions in mixed characteristic}  \label{sec_mixed}
We wish to consider the analogous relation between tame and defectless fields in the case of mixed characteristic. Note that in this case we are not able to repeat the argumentation of the proof of Theorem~\ref{charact_equal} as the valued fields are of characteristic zero and thus are perfect. 
Also, it is relevant for the properties of a given valued field $(K,v)$ of positive residue characteristic $p$ to consider the $p$-divisibility of $(vK)_{vp}$ in place of the $p$-divisibility of $vK$ (cf. ~\cite{[KR]}, \cite{[HH]}, \cite{[J]}). If $K$  itself is of characteristic $p$ or $vK$ is of rank 1,  both groups $vK$ and $(vK)_{vp}$ are equal. But in the case of valued fields of mixed characteristic and higher rank, $(vK)_{vp}$ may be a proper subgroup of $vK$. The following example shows that in this case the equivalence from Theorem~\ref{charact_equal} may not hold.

\begin{example} \label{counterexample}
Take a tame field $(K,v_1)$ of characteristic 0 and residue characteristic $p>0$. 
 Take  $K(x)$ to be the rational function field over $K$ and take $v_x$  to be the $x$-adic valuation  on $K(x)$ and set $v_xx=1$. Take now $F$ to be the henselization of $K(x)$ with respect to $v_x$. Note that $Fv_x=K(x)v_x=K$, so we may consider the composition $v=v_x\circ v_1$ and the valued field $(F,v)$. 
Note that $vF=\Z\times v_1K$ ordered lexicographically and 
 for a polynomial $f= \displaystyle\sum_{i=1}^n a_ix^i \in K[x]$  the summands have distinct values, so we have
\[
v(f)=\min_{1\leq i\leq n}va_ix^i.
\]
Since $(F,v_x)$ is henselian and $(K,v_1)$  is henselian as a tame field, Lemma \ref{coars_hens} yields that also $(F,v)$ is henselian. 

Take an algebraic extension $(L,v')$ of $(F,v)$. Then $v'=v'_x\circ v'_1$, where $v'_x$ is the unique extension of $v_x$ to $L$ and $v'_1$ is the unique extension of $v_1$ to the algebraic extension $Lv'_x$ of $K$. Since $(K,v_1)$ is tame, by Proposition~\ref{tame_impl_def} the valued field $(Lv'_x,v'_1)$ is defectless. Also the field $(L,v'_x)$ is defectless, since it is a henselian field of residue characteristic zero. Therefore by Lemma~\ref{coars_def} the field $(L,v')$ is defectless. 
Thus every algebraic extension of $(F,v)$ is a defectless field. 
On the other hand, the value group $vF=\Z\times v_1K$   is not $p$-divisible. Hence $(F,v)$ is not a tame field.
\EXend
\end{example} 
The above example shows that $p$-divisibility of the whole value group of $(K,v)$ is not a necessary condition for the valued field to ensure that all algebraic extensions of $(K,v)$ are defectless fields. We therefore relax the condition (T1) and consider a more general class of fields. 

Take a valued field $(K,v)$ and denote by $p$  the residue characteristic exponent of $Kv$. We call the value group $vK$ \bfind{roughly $p$-divisible} if the group $(vK)_{vp}$ is $p$-divisible (cf.~\cite{[J]}).

 \bfind{ In the remaining part of this section for a valued field $(K,v)$ we will consider the composition $v=v_0\circ v_1$, where  $v_0$ is the finest coarsening of $v$ that has residue characteristic 0.}
 We call $(Kv_0,v_1)$ the \bfind{core field of $(K,v)$}.  Note that if $(K,v)$ is henselian, then by Lemma~\ref{coars_hens} both fields $(K,v_0)$
 and $(Kv_0,v_1)$ are henselian. If char$Kv=0$, then $v_1$ is a trivial valuation  and if $\chara K>0$, then $v_0$ is trivial. 
Note also that if $\chara Kv=p>0$, then the value group of the core valued field  $(Kv_0,v_1)$ is equal to $(vK)_{vp}$. 

\begin{proposition}   \label{coars_mixed_prop}
Take a henselian field $(K,v)$. Then  $(K,v)$ is a defectless field if and only the core field of $(K,v)$ is a defectless field. 
Moreover, every algebraic extension of $(K,v)$ is a defectless field if and only if every algebraic extension of the core field of $(K,v)$ is a defectless field. 
\end{proposition}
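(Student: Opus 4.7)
The plan is to deduce both statements from Lemma~\ref{coars_def}, after the single observation that any coarsening with residue characteristic zero is automatically a defectless field. Since $(K, v_0\circ v_1)$ is henselian, Lemma~\ref{coars_hens} gives that $(K, v_0)$ and $(Kv_0, v_1)$ are both henselian; moreover, for any algebraic extension $L | K$, with $v'_0, v'_1$ denoting the unique extensions of $v_0, v_1$, the fields $(L, v'_0)$ and $(Lv'_0, v'_1)$ are also henselian. Because $(L, v'_0)$ has residue characteristic $0$, the Ostrowski equality~(\ref{Ostrowski}) forces the defect factor to be trivial for every finite unibranched extension, so $(L, v'_0)$ is a defectless field; in particular so is $(K, v_0)$. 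Applying Lemma~\ref{coars_def} to $(K, v) = (K, v_0\circ v_1)$ then yields the first statement: $(K, v)$ is defectless if and only if $(Kv_0, v_1)$ is.

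For the $(\Leftarrow)$ direction of the second statement, suppose every algebraic extension of $(Kv_0, v_1)$ is a defectless field, and let $(L | K, v)$ be an arbitrary algebraic extension with decomposition $v = v'_0\circ v'_1$. Then $Lv'_0$ is algebraic over $Kv_0$, so the hypothesis gives that $(Lv'_0, v'_1)$ is defectless, and together with the defectlessness of $(L, v'_0)$ noted above, Lemma~\ref{coars_def} applied to $(L, v)$ implies that $(L, v)$ is defectless.

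For the $(\Rightarrow)$ direction, let $(N | Kv_0, v_1)$ be any algebraic extension. The strategy is to construct an algebraic extension $L | K$ with $Lv'_0 = N$; the hypothesis then yields that $(L, v)$ is defectless, and Lemma~\ref{coars_def} in the same form as above forces $(N, v_1) = (Lv'_0, v'_1)$ to be defectless. To build $L$, fix an extension of $v$ to $\tilde K$ inducing compatible extensions of $v_0$ and $v_1$. Because the residue characteristic of $v_0$ is zero, every algebraic extension of $Kv_0$ is separable, and a routine application of Hensel's lemma to lift minimal polynomials---equivalently, the classical bijective correspondence via the residue field map between algebraic subextensions of $K \subseteq K^{\mathrm{ur}}$ and of $Kv_0 \subseteq \widetilde{Kv_0}$ given by henselian ramification theory---produces the required $L \subseteq \tilde K$. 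The only non-routine step in the whole argument is this lifting, which rests on standard henselian ramification theory for residue characteristic zero; everything else is a transparent application of Lemma~\ref{coars_def} combined with the trivial defectlessness of coarsenings whose residue characteristic is~$0$.
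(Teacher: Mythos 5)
Your proof is correct and follows essentially the same route as the paper's: the coarsening $(K,v_0)$ (and likewise $(L,v_0')$) is defectless because its residue characteristic is zero, Lemma~\ref{coars_def} handles both directions of the first assertion and the $(\Leftarrow)$ direction of the second, and the $(\Rightarrow)$ direction reduces to lifting an algebraic extension of $Kv_0$ to an algebraic extension of $K$ with the prescribed residue field. The only difference is that the paper cites Theorem~2.14 of \cite{[K4]} for this lifting, whereas you justify it via the inertia-field correspondence of henselian ramification theory (valid here since $\chara Kv_0=0$ makes every algebraic extension of $Kv_0$ separable); both are fine.
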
 
\begin{proof} 
Since $(K,v_0)$ is a henselian field of residue characteristic 0, it is a defectless field. Lemma~\ref{coars_def} yields that $(K,v)$ is a defectless field if and only if the core field $(Kv_0,v_1)$ of $(K,v)$ is defectless.

For the proof of the second assertion, assume first that  every algebraic extension of the core field $(Kv_0,v_1)$ of $(K,v)$ is a defectless field. Take an algebraic extension  $(L,v')$ of $(K,v)$. Then we can write $v'=v'_0\circ v'_1$, where $v'_0$ is the unique extension of $v_0$ to $L$ and  $v'_1$ is the unique extension of $v_1$ to $Lv'_0$. Note that $L|K$ and $v'_0|Kv_0$  are algebraic extensions. In particular the ranks of $v'_0$  and $'_1$ remains the same as the rank of $v_0$ and $v_1$ respectively. This yields that  $(Lv'_0,v'_1)$ is the core field of $(L,v')$. Hence   $(Lv'_0,v'_1)$  is defectless as an algebraic extension of $(Kv_0,v_1)$. thus, by the first part of the proof, also $(L,v')$ is a defectless field.

Assume now that every algebraic extension of $(K,v)$ is a defectless field. Take an algebraic extension $F$ of $Kv_0$. As  $(Kv_0,v_1)$ is henselian, there is a unique extension $v'_1$ of $v_1$ to $F$. Since $F|Kv_0$ is algebraic, there is an algebraic extension $L$ of $K$ such that $Lv'_0=F$, where $v'_0$ is the unique extension of $v_0$ to $L$ (cf. Theorem 2.14 of~\cite{[K4]}). Also $v'=v'_0\circ v'_1$ is the unique extension of $v$ to $L$. By assumption $(L,v')$ is a defectless field, so Lemma~\ref{coars_def} yields that also the field $(Lv'_0,v'_1)=(F,v'_1)$ is  defectless.
\end{proof}

We call a henselian valued field $(K,v)$  \bfind{a roughly tame field} if the core field of $(K,v)$  is a tame field. 
A direct consequence of the above definition is the following property.
\begin{lemma} \label{rtame_equchar}
If $(K,v)$ is a valued field with $\chara K=\chara Kv$ or  $\chara Kv>0$ and $(vK)_{vp}=vK$, then the field is tame if and only if it is roughly tame.
\end{lemma}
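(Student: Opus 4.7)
The plan is to show that under each of the stated hypotheses the composition $v=v_0\circ v_1$ degenerates in such a way that the core field $(Kv_0,v_1)$ coincides (up to the obvious identification) with $(K,v)$ itself, after which the equivalence is immediate from the definition of roughly tame.

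First I would dispatch the case $\chara Kv = 0$. Here $v$ already has residue characteristic zero, so by the description given just before the lemma $v_0 = v$ and $v_1$ is trivial. Then $(K,v)$ is tame because every henselian valued field of residue characteristic zero is tame, and the core field $(Kv,v_1)$ is trivially tame, so both sides of the asserted equivalence hold. This covers the subcase $\chara K = \chara Kv = 0$.

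Next I would treat $\chara Kv = p > 0$. If $\chara K = \chara Kv$, so $\chara K = p$, the remark in the paper that ``if $\chara K > 0$ then $v_0$ is trivial'' applies, giving $Kv_0 = K$ and $v_1 = v$; the core field is $(K,v)$ itself and rough tameness coincides with tameness by definition. If instead $\chara K = 0$ and $(vK)_{vp} = vK$, I would argue that $v_0$ is still trivial, this time using the value-group description. The value group of the core field $(Kv_0,v_1)$ is the convex subgroup $(vK)_{vp}$ of $vK$, and $v_0K \cong vK / (vK)_{vp}$. The hypothesis $(vK)_{vp} = vK$ therefore forces $v_0K = \{0\}$, so $v_0$ is trivial; once again $Kv_0 = K$, $v_1 = v$, and the core field is $(K,v)$.

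I do not expect a real obstacle here: in all three subcases the point is simply that the decomposition $v = v_0 \circ v_1$ is degenerate, so that ``the core field is tame'' and ``$(K,v)$ is tame'' are literally the same statement. The only thing to watch is to invoke the correct half of the dichotomy recorded at the end of the preceding paragraph of the paper (the characterization of when $v_0$ or $v_1$ is trivial), together with the identification of the value group of $(Kv_0,v_1)$ with $(vK)_{vp}$.
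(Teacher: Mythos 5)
Your argument is correct and is exactly the reasoning the paper intends: the lemma is stated there as a ``direct consequence of the definition,'' and your case analysis (residue characteristic zero; $\chara K>0$ forcing $v_0$ trivial; $(vK)_{vp}=vK$ forcing $v_0K\cong vK/(vK)_{vp}=\{0\}$) correctly verifies that in each case the core field is $(K,v)$ itself, so tameness and rough tameness coincide by definition. No gaps.
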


As in the case of tame fields, the property of being roughly tame is preserved under algebraic extensions. 
%Using the definition of roughly tame fields together with Proposition \ref{tame_impl_def} we obtain the following result.

\begin{lemma} \label{rt}
Let $(K,v)$ be a roughly tame field. Then any algebraic extension $(L|K,v)$ is a roughly tame field.
\end{lemma}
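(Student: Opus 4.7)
The plan is to reduce the statement to Proposition~\ref{tame_impl_def} applied to the core field. Since $(K,v)$ is roughly tame, by definition it is henselian and its core field $(Kv_0,v_1)$ is tame. Given an algebraic extension $L|K$, the valuation $v$ extends uniquely to $L$ as $v'=v_0'\circ v_1'$, where $v_0'$ is the unique extension of $v_0$ to $L$ and $v_1'$ is the unique extension of $v_1$ to $Lv_0'$.

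First I would verify that $(Lv_0',v_1')$ is the core field of $(L,v')$, i.e.\ that $v_0'$ is the finest coarsening of $v'$ of residue characteristic~$0$. Since $L|K$ is algebraic, the quotient $v'L/vK$ is torsion, so intersection with $vK$ induces an order-preserving bijection between the convex subgroups of $v'L$ and those of $vK$. Under this bijection, the convex subgroup of $v'L$ attached to $v_0'$ corresponds to $(vK)_{vp}$, while the convex subgroup attached to the finest residue-characteristic-zero coarsening of $v'$ is $(v'L)_{v'p}$. The latter intersects $vK$ in a convex subgroup containing $v'p=vp$, hence containing $(vK)_{vp}$; by minimality the two subgroups coincide.

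Once the core field is identified, the rest is immediate: since $L|K$ is algebraic the residue field extension $Lv_0'|Kv_0$ is algebraic, and because $(Kv_0,v_1)$ is tame, Proposition~\ref{tame_impl_def} yields that $(Lv_0',v_1')$ is tame as well; furthermore $(L,v')$ is henselian because $L|K$ is algebraic and $(K,v)$ is henselian, so $(L,v')$ is roughly tame by definition. I do not expect a real obstacle here; the argument is essentially bookkeeping about coarsenings, and the only step that requires genuine care is the identification of the core field of $(L,v')$, which rests on the torsion property of $v'L/vK$ for algebraic valued field extensions.
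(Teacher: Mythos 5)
Your proof is correct and follows essentially the same route as the paper: both reduce the claim to Proposition~\ref{tame_impl_def} applied to the core field, using that the core field of $(L,v')$ is an algebraic extension of the core field of $(K,v)$. The only difference is that you verify explicitly, via the correspondence of convex subgroups under the torsion extension $v'L/vK$, that $(Lv_0',v_1')$ really is the core field of $(L,v')$ --- a fact the paper justifies (by a rank argument) inside the proof of Proposition~\ref{coars_mixed_prop} and then uses without further comment.
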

\begin{proof}
By assumption the core field of $(K,v)$ is a tame field. Since $(L,v)$ is an algebraic extension of $(K,v)$, also the core field of $(L,v)$ is an algebraic extension of the core field of $(K,v)$.  Hence by Proposition~\ref{tame_impl_def} the core field of $(L,v)$ is tame and consequently the valued field $(L,v)$ is roughly tame.
\end{proof}

\begin{proposition} \label{roughly_charact}
Take a henselian valued field $(K,v)$ of  residue characteristic $p$. Then $(K,v)$ is roughly tame if and only if the following conditions hold
\sn 
{\bf (RTF1)} $vK$ is roughly $p$-divisible
\sn
{\bf (RTF2)} the residue field $Kv$ is perfect,
\sn
{\bf (RTF3)} $(K,v)$ is a defectless field.
\end{proposition}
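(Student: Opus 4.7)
The plan is to reduce the three conditions (RTF1)--(RTF3) directly to the defining conditions (TF1)--(TF3) of tameness applied to the core field $(Kv_0,v_1)$, using the definition of roughly tame together with Proposition~\ref{coars_mixed_prop} and the identification of the invariants of the core field that was recorded in the preliminaries.

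First I would recall that by definition $(K,v)$ is roughly tame precisely when the core field $(Kv_0,v_1)$ is tame, and that since $(K,v)$ is henselian, so is $(Kv_0,v_1)$ by Lemma~\ref{coars_hens}. Hence the tameness of $(Kv_0,v_1)$ is equivalent to the conjunction of (TF1) for $v_1(Kv_0)$, (TF2) for $(Kv_0)v_1$, and (TF3) for $(Kv_0,v_1)$.

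Next I would translate each of these three conditions into the data of $(K,v)$. For the value group: the preliminaries state that when $\chara Kv=p>0$, the value group $v_1(Kv_0)$ equals $(vK)_{vp}$; thus $p$-divisibility of $v_1(Kv_0)$ is exactly (RTF1). For the residue field: by the very construction of the composition $v=v_0\circ v_1$ one has $(Kv_0)v_1=Kv$, so perfectness of $(Kv_0)v_1$ is exactly (RTF2). For defectlessness: Proposition~\ref{coars_mixed_prop} asserts that $(K,v)$ is defectless if and only if its core field is defectless, which gives the equivalence of (TF3) for $(Kv_0,v_1)$ with (RTF3). Combining the three equivalences yields the proposition.

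The argument is essentially a bookkeeping exercise, so there is no serious obstacle; the only point that requires a moment of care is the case distinction in defining $(vK)_{vp}$ (which collapses to $vK$ when $\chara K>0$), so that the statement is uniformly valid whether $(K,v)$ is of equal or of mixed characteristic. In the equal positive characteristic case $v_0$ is trivial, the core field is $(K,v)$ itself, $(vK)_{vp}=vK$, and the result reduces to Lemma~\ref{rtame_equchar} together with the characterization of tameness recalled in Section~\ref{preliminaries}.
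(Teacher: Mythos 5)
Your proposal is correct and follows essentially the same route as the paper: it identifies $v_1(Kv_0)=(vK)_{vp}$ and $(Kv_0)v_1=Kv$ to match (TF1)--(TF2) with (RTF1)--(RTF2), and invokes Proposition~\ref{coars_mixed_prop} for the equivalence of (TF3) with (RTF3). No issues.
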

\begin{proof}
Observe that $v_1(Kv_0)=(vK)_{vp}$ and $(Kv_0)v_1=Kv$. Hence $(Kv_0,v_1)$  satisfies (TF1) if and only if (RTF1) holds for $(K,v)$. Similarly, (TF2) holds for  $(Kv_0,v_1)$ if and only if (RTF2) holds for $(K,v)$. It remains to consider the conditions (TF3) for $(Kv_0,v_1)$ and (RTF3) for $(K,v)$. The equivalence of these conditions follows directly from Proposition~\ref{coars_mixed_prop}.
\end{proof}

Note that every tame field satisfies (RTF1)-(RTF3). We thus obtain the following property.
\begin{corollary} 
Every tame field is a roughly tame field. 
\end{corollary}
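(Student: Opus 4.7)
The plan is to apply Proposition~\ref{roughly_charact} and verify the three conditions (RTF1), (RTF2), (RTF3) assuming that $(K,v)$ is henselian and satisfies (TF1), (TF2), (TF3). Conditions (RTF2) and (RTF3) are literally (TF2) and (TF3), so they transfer with nothing to prove. The only substantive content is therefore to deduce (RTF1), namely that the convex subgroup $(vK)_{vp}$ is $p$-divisible, from the $p$-divisibility of the full value group $vK$.

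To handle (RTF1) I would split into cases on the characteristics. If $\chara Kv=0$ then $p=1$ by the convention of Section~\ref{preliminaries} and (RTF1) is vacuous. If $\chara K=p>0$ then $(vK)_{vp}=vK$ by definition, so (TF1) directly yields (RTF1). The remaining case is $\chara K=0$ and $\chara Kv=p>0$. Here I would pick $\gamma\in(vK)_{vp}$ and use (TF1) to take $\delta\in vK$ with $p\delta=\gamma$; a short ordered-group observation (replacing $\gamma$ by $-\gamma$ if needed, both $\delta$ and $\gamma$ have the same sign and $|\delta|\le|\gamma|$) places $\delta$ between $0$ and $\gamma$, so convexity of $(vK)_{vp}$ forces $\delta\in(vK)_{vp}$. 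Hence $(vK)_{vp}$ is $p$-divisible.

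There is no real obstacle here; the convex-subgroup-of-a-$p$-divisible-ordered-group check is elementary, and the rest of the work has already been packaged into Proposition~\ref{roughly_charact}. An alternative route would be to prove that the core field $(Kv_0,v_1)$ of a tame $(K,v)$ is itself tame, but that would again come down to (TF1)--(TF3) for the core valuation together with Proposition~\ref{coars_mixed_prop}, and would be no shorter than the direct verification above.
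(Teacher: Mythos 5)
Your proposal is correct and follows essentially the same route as the paper, which simply notes that every tame field satisfies (RTF1)--(RTF3) and invokes Proposition~\ref{roughly_charact}. The only addition is that you spell out the (correct) elementary observation that a convex subgroup of a $p$-divisible ordered abelian group is itself $p$-divisible, a detail the paper leaves implicit.
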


\sn

We present now a construction of Galois defect extensions in the case of valued fields of mixed characteristic. 
We will need the following observation (for the proof see Lemma 3.19 of~\cite{[Ku1]}).
\begin{lemma}                                                     \label{congruece}
Assume that $(K,v)$ is a valued field of characteristic 0 and residue characteristic $p$. Take elements $c_1,\ldots,c_n\in K$ of values $\geq - \frac{vp}{p}$. Then
\[
(c_1+\cdots + c_n)^p \equiv c_1^p+\cdots + c_n^p \,(\hspace{-3mm}\mod \mathcal{O}_K).
\]
\end{lemma}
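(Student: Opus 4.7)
The plan is to expand $(c_1+\cdots+c_n)^p$ by the multinomial theorem and show that every mixed term lies in $\mathcal{O}_K$. Concretely, I would write
\[
(c_1+\cdots+c_n)^p \;=\; \sum_{i=1}^n c_i^p \;+\; \sum_{\substack{k_1+\cdots+k_n=p\\ 0\le k_i<p}} \binom{p}{k_1,\ldots,k_n}\,c_1^{k_1}\cdots c_n^{k_n},
\]
and my task reduces to checking that each summand in the second sum has nonnegative $v$-value.

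For the main step, I would use two ingredients. First, since $p$ is prime and each $k_i<p$ in the mixed sum, the multinomial coefficient $\binom{p}{k_1,\ldots,k_n}=p!/(k_1!\cdots k_n!)$ is divisible by $p$ as an integer, so $v\!\left(\binom{p}{k_1,\ldots,k_n}\right)\ge vp$. Second, using $v c_i\ge -vp/p$ and $k_1+\cdots+k_n=p$,
\[
v(c_1^{k_1}\cdots c_n^{k_n}) \;=\; \sum_{i=1}^n k_i\, v(c_i) \;\ge\; -\frac{vp}{p}\sum_{i=1}^n k_i \;=\; -vp.
\]
Adding these two estimates gives $v$-value at least $vp+(-vp)=0$ for each mixed term, so the whole second sum lies in $\mathcal{O}_K$, which is exactly the claimed congruence.

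There is no real obstacle here; the only point requiring a moment's care is the divisibility of the multinomial coefficient by $p$, which holds precisely because $p$ is prime and none of the $k_i$ equals $p$ in the mixed terms. (If one preferred, the statement could equivalently be obtained by induction on $n$, reducing to the binomial case $n=2$ via $v(c_1+\cdots+c_{n-1})\ge \min_i vc_i \ge -vp/p$, but the multinomial expansion handles all $n$ in one stroke.)
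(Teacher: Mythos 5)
Your proof is correct and complete. The paper does not prove this lemma itself but only cites Lemma~3.19 of \cite{[Ku1]}; your multinomial-expansion argument is the standard one given there, with both key points handled properly: the divisibility of $\binom{p}{k_1,\ldots,k_n}$ by $p$ when all $k_i<p$ (so its value is at least $vp$), and the lower bound $\sum_i k_i\,vc_i \ge -vp$ coming from $\sum_i k_i = p$. Nothing is missing.
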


\begin{example} \label{ex_valgp_mixed}
We consider now a generalization of Example~3.22 of~\cite{[Ku1]}. Assume that $(K,v)$ is a field of characteristic 0 and residue characteristic $p>0$ such that $(vK)_{vp}$ is not $p$-divisible.
Then there is $a_0\in K$ with $-vp\leq va_0<0$ and $\frac{1}{p}va_0\notin vK$. Set $\alpha:=va_0$. Fix an extension of $v$
to $\tilde{K}$ and denote it again by $v$. 
Set $F_0=K(\zeta_p)$, where $\zeta_p$ denotes a primitive $p$-th root of unity. We obtain that $\frac{1}{p}\alpha\notin vF_0$. 

Take $a_1$ to be a root of the polynomial $X^p-X-a_0$. By induction, choose $a_{i+1}$ to be a root of the
polynomial $X^p-X+a_i$. Further, take $a$ to be a root of the polynomial
$X^p-a_0$. Then $va=\frac{1}{p}va_0\geq -\frac{vp}{p}$. By induction on $i$, we obtain that  $va_i=\frac{1}{p^i}\alpha$. Hence $-\frac{vp}{p}\leq va_i<0$. 

As in Example~\ref{valgp_eqal} we deduce that  $(F_0(a_i)|F_0,v)$ is a unibranched extension with $vF_0(a_i)=\Z\frac{1}{p^i}\alpha+vF_0$ and
$F_0(a_i)v=F_0v$. 
Moreover, for $\displaystyle F:=\bigcup_{i\in\N}F_0(a_i)$ we obtain that $(F|F_0,v)$ is a unibranched extension with 
$vF=\frac{1}{p^{\infty}}\big(\Z\alpha\big)+vF_0$ and $Fv=F_0v$.

For every $k\geq 2$ set
\[
b_k=a-\sum_{i=1}^ka_i\in F_0(a_k,a).
\]
Since for $i\geq 1$ we have $a_{i+1}^p=a_{i+1}-a_i$ and $a_1^p=a_1+a_0$, Lemma~\ref{congruece} yields that 
\begin{eqnarray*}
0 & = & a^p-a_0=\left(b_k+\sum_{i=1}^k a_i\right)^p-a_0\\
& \equiv  & b_k^p+\sum_{i=1}^k a_i^p-a_0 = b_k^p+a_k (\hspace{-3mm}\mod \mathcal{O}_v).
\end{eqnarray*}
Hence $v(b_k^p+a_k)\geq 0$ and thus $vb_k=\frac{1}{p}va_k=\frac{1}{p^{k+1}}\alpha$. Repeating the argumentation from Example~\ref{valgp_eqal} we obtain that the extension $F_0(a_k,a)|F_0(a_k)$ is unibranched of degree $p$ with 
$F_0(a_k,a)v=F_0(a_k)v$ and
$vF_0(a_k,a)=\Z(\frac{1}{p^{k+1}}\alpha)+vF_0$.
Therefore
$F(a)|F$ is a unibranched Galois extension of degree $p$. Moreover, $F(a)v=Fv$ and  $vF(a)= vF$. We thus obtain that $(F(a)|F,v)$ is a Kummer defect extension of degree $p$.
\EXend
\end{example}

The construction presented in Example~\ref{ex_valgp_mixed} proves in particular the following fact.

\begin{lemma} \label{prop_valgp_mixed}
Assume that $(K,v)$ is a valued field of characteristic zero and positive residue characteristic $p$, with $(vK)_{vp}$ not $p$-divisible. Then there is a unibranched algebraic extension $(F|K,v)$ such that the field $(F,v)$ is not defectless.
\end{lemma}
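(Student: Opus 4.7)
The plan is to take $(F,v)$ to be the valued field built in Example~\ref{ex_valgp_mixed}, but with $K$ replaced by its henselization $K^h$ as the starting point. The example already shows that the extension $(F(a)|F,v)$ constructed there is a Kummer defect extension of degree $p$, and hence that $(F,v)$ is not a defectless field. What remains is to verify that $(F|K,v)$ itself is a unibranched algebraic extension.

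The reason I would pass to $K^h$ first is that Example~\ref{ex_valgp_mixed} begins by adjoining a primitive $p$-th root of unity $\zeta_p$, and without henselianity of $K$ the extension $K(\zeta_p)|K$ need not be unibranched. Since $vK^h=vK$, the hypothesis that $(vK^h)_{vp}=(vK)_{vp}$ is not $p$-divisible carries over, and an element $a_0\in K\subseteq K^h$ with $-vp\leq va_0<0$ and $\tfrac{1}{p}va_0\notin vK^h$ still exists (this is the initial observation of Example~\ref{ex_valgp_mixed}). Running the construction of the example with $K^h$ playing the role of the base field then yields an algebraic extension $F|K^h$ together with its Kummer defect extension $F(a)|F$.

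Unibranchedness of $(F|K,v)$ then follows by composition: $(K^h|K,v)$ is unibranched by the defining property of the henselization, and $(F|K^h,v)$ is unibranched because every algebraic extension of a henselian valued field has a unique extension of the valuation. Thus $v$ extends uniquely from $K$ to $F$, and the existence of the defect extension $F(a)|F$ shows $(F,v)$ is not defectless, proving both assertions of the lemma. The only real obstacle is precisely this mild reduction to the henselian case, which is needed because the example's construction passes through $K(\zeta_p)$; everything else is a direct invocation of Example~\ref{ex_valgp_mixed}.
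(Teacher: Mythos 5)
Your core move---reducing the lemma to Example~\ref{ex_valgp_mixed}---is exactly the paper's proof: the paper simply declares that the construction of that example, applied to $(K,v)$ itself, establishes the statement. The detour you add through the henselization, however, rests on a false premise, and this is a genuine gap. The extension $(K^h|K,v)$ is \emph{not} unibranched unless $K$ is already henselian. The henselization is the fixed field of the decomposition group $G^d$ of a chosen extension of $v$ to $K^{sep}$, and the extensions of $v$ to $K^h$ correspond to the double cosets $G^d\backslash \Gal(K^{sep}|K)/G^d$, which form a singleton only when $G^d$ is the whole Galois group. Concretely: $-1$ is a square in $\Q_5$, so $\Q(i)$ embeds into the henselization of $(\Q,v_5)$, yet $v_5$ has two extensions to $\Q(i)$ because $5=(2+i)(2-i)$ splits. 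What the henselization does satisfy is that it is an immediate separable extension equipped with one distinguished extension of $v$ that is henselian; uniqueness of the extension of $v$ is not part of its defining property. So the first step of your composition argument fails, and you have merely traded the possibly non-unibranched step $K(\zeta_p)|K$ for the possibly non-unibranched step $K^h|K$. (The ``not defectless'' half of your conclusion survives, since all extensions of $v$ to your $F$ are conjugate over $K$ and being a defectless field is invariant under valued-field isomorphism; it is only the unibranchedness assertion of the lemma that your argument does not deliver.)

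If you want unibranchedness over $K$ itself, the right observation is that in Example~\ref{ex_valgp_mixed} every step \emph{above} $F_0=K(\zeta_p)$ is proved unibranched by exhausting the fundamental inequality (one extension already realizes the full degree through ramification), and the root of unity is used only at the very end to make $F(a)|F$ Galois. For the present lemma, which asks only for a non-defectless $(F,v)$, you can run the same tower directly over $K$ without adjoining $\zeta_p$: since $\frac{1}{p}va_0\notin vK$, one gets $(vK(a_k):vK)=p^k=[K(a_k):K]$ and $(vK(a_k,a):vK(a_k))=p=[K(a_k,a):K(a_k)]$, which forces every finite stage, hence the union $F$ and also $F(a)$, to be unibranched over $K$, while $(F(a)|F,v)$ remains an immediate extension of degree $p$ and hence a defect extension. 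Alternatively, note that in the only place the lemma is used (the proof of Theorem~\ref{thm_rtame}) the field $(K,v)$ is henselian, so unibranchedness is automatic there; but as stated the lemma requires the argument above.
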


Take a valued field $(K,v)$ of characteristic 0 and positive residue characteristic~$p$. Consider the sequence of elements $d_i\in \tilde{K}$:
\begin{equation} \label{di}
d_0\in K\textrm{ with } -vp\leq vd_0< 0 \textrm{ and } d_i^p=d_{i-1}, \, i\in\N. 
\end{equation}

\begin{example} \label{ex_resf_mixed}
Assume that $(K,v)$ is a valued field of characteristic $0$ and  residue characteristic $p>0$ with  roughly $p$-divisible value group 
 and imperfect residue field $Kv$. Assume additionally that there is a sequence of elements $d_i$ 
as in~(\ref{di}) and an extension of $v$ to $K_0:=K(d_i | i\in\N)$ such that the residue field $K_0v$ is imperfect. 

Replacing $K_0$ by $K_0(\zeta_p)$, if necessary, we assume that $K_0$ contains a primitive $p$-th root of unity.

Take $a_0\in\mathcal{O}_{K_0}$ such that $\eta :=a_0v\notin (K_0v)^p$.  Set $b_0:=a_0d_0$ and take $b_1$ to be a root of the polynomial $X^p-X-b_{0}$. 
By induction, choose $b_{i}$ to be a root of the polynomial $X^p-X+b_{i-1}$ and set $a_i=\frac{b_i}{d_i}$. 

Assume we have shown that $va_i=0$ and $a_iv=\eta^{1/p^i}$ for some $i\geq 0$. Then in particular $vb_i=vd_i<0$. 
Since $v(b_{i+1}^p-b_{i+1})=vb_i<0$, we deduce that $vb_{i+1}<0$ and $vb_{i+1}^p=vb_i$. Thus  $vb_{i+1}>vb_i$ and
\[
v\left( \frac{b_{i+1}^p}{d_i}+a_i\right)=v(b_{i+1}^p+b_i)-vd_i=vb_{i+1}-vd_i>vb_i-vd_i=0.
\]
Hence $\left(\frac{b_{i+1}}{d_{i+1}}\right)^pv=\frac{b_{i+1}^p}{d_i}v=-a_iv$ and consequently,
\[a_{i+1}v=\frac{b_{i+1}}{d_{i+1}}v=-eta^{1/p^{i+1}}.\]
In particular we have that $va_{i+1}=0$. As in the previous examples we deduce that $(K_0(b_i)|K_0,v)$ is a unibranched extension with $vK_0(b_i)=vK_0$ and $K_0(b_i)v=K_0v\left(\eta^{\frac{1}{p^i}}\right)$.

Note that for every $i\geq 0$,
\begin{equation} \label{vbi}
vb_i=vd_i=\frac{1}{p^i}vd_0\geq -\frac{vp}{p^i}.
\end{equation}

Set $F:= K_0(b_i |i\in\N)$. Then $(F|K_0,v)$ is a unibranched extension with $vF=vK_0$ and $Fv=K_0v\left(\eta^{\frac{1}{p^i}} \, | \, i\in\N\right)$. Take further $b$ to be a root of the polynommial $X^p-b_0$. 
Note that $vb=\frac{1}{p}vd_0\geq -\frac{vp}{p}$. Since $b^p=a_0d_0$, we obtain that $(\frac{b}{d_1})^p=a_0$ and thus $\frac{b}{d_1}v=\eta^{1/p}\notin K_0v$. Hence, by the fundamental inequality we obtain that $(K_0(b)|K_0,v)$ is a unibranched extension with $vK_0(b)=vK_0$ and $K_0(b)v=K_0v(\eta^{1/p})$ of degree $p$ over $K_0v$. 

For every $k\geq 1$ set 
\[
c_k= b-\sum_{i=1}^k b_i\in K_0(b_k,b).
\]
Since all summands on the right hand side have value at least $-\frac{vp}{p}$, also \mbox{$vc_k\geq -\frac{vp}{p}$.} Hence Lemma~\ref{congruece} together with the fact that $b_i^p=b_i-b_{i-1}$ and $b_1^p=b_1+b_0$ yields that
\[
0=b^p-b_0= \left( c_k+\sum_{i=1}^k b_i  \right)^p -b_0\equiv c_k^p+\sum_{i=1}^k b_i^p-b_0\equiv 
c_k^p+b_k \, (\mod\mathcal{O}_v).
\]
Hence $v(c_k^p+b_k)\geq 0$ and thus 
\[
v\left(\left(\frac{c_k}{d_{k+1}}\right)^p+a_k\right)
=v\left(\frac{c_k^p}{d_k}+\frac{b_k}{d_k}\right)\geq -vd_k>0.
\]
Consequently, $\frac{c_k}{d_{k+1}}v=-\eta^{1/p^{k+1}}\notin K_0(b_k)v$. Similarly to the previous examples we show that the extension $(K_0(b_k,b)|K_0(b_k),v)$ is unibranched of degree $p$ with 
$vK_0(b_k,b)=vK_0(b_k)$ and
$K_0(b_k,b)v=K_0v\left(\eta^{1/p^{k+1}}\right)$ of degree $p$ over $K_0(b_k)v$.
Therefore
$(F(b)|F,v)$ is a unibranched extension of degree $p$. Since $\zeta_p\in F$, it is a Galois extension. 
Moreover, $vF(b)=vF$ and  $F(b)v= Fv$. We thus obtain that $(F(b)|F,v)$ is a Kummer defect extension of degree $p$.
\EXend
\end{example}

Take a valued field $(K,v)$ of characteristic $0$ and  residue characteristic $p>0$ with  roughly  $p$-divisible value  group  and imperfect residue field $Kv$. Note that in the above construction we allow the case $K_0=K$, i.e., $K$ contains a sequence defined in~(\ref{di}). This holds in particular if  $(K,v)$ has a cross-section, that is,
a group homomorphism $s\colon vK\to K^{\times}$ such that $v\circ s=\textup{id}_{vK}$. 
As every $\aleph_1$-saturated valued field has a cross-section  (cf. Lemma 3.3.39 of~\cite{[ADH]}), the above construction is   possible in particular for $\aleph_1$-saturated valued fields. 

We present now another construction, which does not require the existence of the sequence~(\ref{di}). However, as in the proof of Theorem~\ref{charact_equal}, we need to pass first to the absolute ramification field. We will need the following observation. 
\begin{lemma}   \label{2ext}
Assume that $(K,v)$ is a valued field of characteristic 0 and residue characteristic $p>0$  with roughly $p$-divisible value group and imperfect residue field. Take $a\in\mathcal{O}_v$ with $av\notin (Kv)^p$. Then there are two linearly disjoint unibranched extensions $(E|K,v)$ and $(E'|K,v)$ of degree $p$ with $Ev=E'v=Kv(av^{1/p})$.
\end{lemma}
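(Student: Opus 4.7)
The strategy is to realize $E$ as a Kummer-type extension generated by a $p$-th root of $a$, and $E'$ as an Artin--Schreier-type extension generated by a root of $X^p-X-d^pa$ for a suitably chosen $d\in K$. Both extensions will automatically satisfy $vE=vE'=vK$ and $Ev=E'v=Kv(av^{1/p})$, so the nontrivial point is to establish linear disjointness; since both have prime degree $p$, this reduces to showing $E\neq E'$.

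For the first extension, I would take $b\in\tilde K$ with $b^p=a$. Since $av\notin(Kv)^p$, for every extension of $v$ to $K(b)$ the residue of $b$ is a $p$-th root of $av$, forcing the residue degree to be $p$. The fundamental inequality then yields $g=e=1$ and $f=p$, so $E:=K(b)$ is unibranched of degree $p$ with $vE=vK$ and $Ev=Kv(av^{1/p})$.

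For the second extension, I would use the rough $p$-divisibility of $vK$ to pick $d\in K$ with $pvd=-vp$, and let $\vartheta$ be a root of $X^p-X-d^pa$. Since $v(d^pa)=-vp<0$, every root of this polynomial has value $vd$, and dividing the defining relation by $d^p$ gives $(\vartheta/d)^p=a+\vartheta/d^p\equiv a\pmod{\mathfrak m}$, so $(\vartheta/d)v=av^{1/p}$. The fundamental inequality again yields that $E':=K(\vartheta)$ is unibranched of degree $p$ with $vE'=vK$ and $E'v=Kv(av^{1/p})$.

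The main obstacle is ruling out $E=E'$. I would argue by contradiction: assume $\vartheta\in E$ and write $\vartheta=\sum_{i=0}^{p-1}c_ib^i$ with $c_i\in K$. Because $\{1,bv,\ldots,(bv)^{p-1}\}$ is a $Kv$-basis of $Ev$, the value of any such combination equals $\min_i vc_i$; hence $\min_i vc_i=v\vartheta=vd=-vp/p$, so $vc_i\geq -vp/p$ for all $i$ and Lemma~\ref{congruece} applies to give
\[
\vartheta^p\equiv\sum_{i=0}^{p-1}c_i^p a^i\pmod{\mathcal{O}_E}.
\]
Substituting this into the exact equation $\vartheta^p-\vartheta=d^pa$ and comparing coefficients in the basis $1,b,\ldots,b^{p-1}$ gives $vc_i\geq 0$ for $i\geq 1$ and $c_0^p-c_0-d^pa\in\mathcal{O}_K$. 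Dividing by $d^p$ and using that $v(c_0/d^p)=(p-1)vp/p$ is strictly less than $vp$, the residue reduction forces $((c_0/d)v)^p=av$ in $Kv$; but $c_0/d\in K$, so this produces a $p$-th root of $av$ inside $Kv$, contradicting the hypothesis $av\notin(Kv)^p$. The delicate step is precisely this extraction of the $b^0$-coefficient in $K$ together with the residue reduction---everything else amounts to routine valuation-theoretic bookkeeping.
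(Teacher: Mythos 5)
Your two constructions are fine: $E=K(b)$ with $b^p=a$, and $E'=K(\vartheta)$ with $\vartheta^p-\vartheta=d^pa$ for $vd=-\frac{vp}{p}$ (such $d$ exists by rough $p$-divisibility), are indeed unibranched of degree $p$ with residue field $Kv(av^{1/p})$, and your congruence computation correctly shows $\vartheta\notin K(b)$. The gap is in the very first reduction: two \emph{distinct} extensions of prime degree $p$ need not be linearly disjoint. For example, $\Q(\sqrt[3]{2})$ and $\Q(\omega\sqrt[3]{2})$ are distinct cubic extensions of $\Q$ whose compositum has degree $6$, not $9$. The reduction ``$E\neq E'$ implies linear disjointness'' is valid only when one of the two extensions is Galois over $K$, and neither of yours is known to be: the lemma does not assume $\zeta_p\in K$, so $K(a^{1/p})$ need not be normal, and in characteristic $0$ the roots of $X^p-X-d^pa$ do not differ by elements of $\F_p$, so $K(\vartheta)$ need not be normal either. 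What your contradiction argument actually establishes is $E\cap E'=K$, which is strictly weaker than $[E.E':K]=p^2$.

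The gap is repairable with the tools you already have: if $E$ and $E'$ were not linearly disjoint, then by the irreducibility criterion for $X^p-a$ over a field ($p$ prime), $a$ would be a $p$-th power in $E'$, so $E'=K(\zeta_p^i b)$ for some $i$; since $\zeta_p v=1$, the elements $(\zeta_p^i b)^j$ have exactly the same values and residues as the $b^j$, so your congruence computation excludes $\vartheta\in K(\zeta_p^i b)$ for \emph{every} $i$, not just $i=0$. That extra step is needed and is missing. The paper sidesteps the issue entirely: it takes $b$ and $c$ to be roots of $X^p-ad^{p^2}$ and $X^p-X-ad^{p^2}$ with $-\frac{vp}{p^2}<vd<0$, so that the difference $e=b-c$ satisfies $\frac{e}{d}v=(av)^{1/p^2}\notin Ev$; the fundamental inequality then forces $[E.E':E]\geq p$ directly, with no appeal to distinctness or normality. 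You should either adopt that positive argument or add the conjugate-fields step above.
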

\begin{proof}
Since the value group of $(K,v)$ is roughly $p$-divisible, there is an element $d\in K$ such that $-\frac{vp}{p^2}<vd<0$. Take $b\in \tilde{K}$ to be a root of the polynomial 
\begin{equation} \label{b}
f(x)=X^p-ad^{p^2}
\end{equation}
and $c\in \tilde{K}$ to be a root of the polynomial 
\begin{equation} \label{c}
f(x)=X^p-X-ad^{p^2}.
\end{equation}
Set $E=K(b)$ and $E'=K(c)$. Since $v\frac{b^p}{d^{p^2}}=va=0$ we deduce that $\frac{b}{d^p}=(av)^{\frac{1}{p}}\notin Kv$ nad $vb=vd^p-\frac{vp}{p}$. Hence by the fundamental inequality $(E|K,v)$ is a unibranched extension of degree $p$ with $Ev=Kv((av)^{\frac{1}{p}}).$

Similarly, as $v(c^p-c)=vad^{p^2}<0$, we obtain that $vc<0$ and thus $vd^{p^2}=v(c^p-c)=vc^p<vc$. Consequently, $vc=vd^p>-\frac{vp}{p}$
\[
v\left( \left( \frac{c}{d^p}\right) ^p-a \right )=vc-vd^{p^2}>vc^p-vd^{p^2}=0
\]
and hence $\frac{c}{d^p}v=(av)^{\frac{1}{p}}$. Thus also $(E'|K,v)$ is a unibranched extension of degree $p$ with $E'v=Kv((av)^{\frac{1}{p}}).$

Set $e=b-c\in E'.E$. As $vb,vc>-\frac{vp}{p}$, we obtain that $ve\geq -\frac{vp}{p}$, and by Lemma~\ref{congruece} we obtain that
\[
0=b^p-ad^{p^2}=(e+c)^p-ad^{p^2}\equiv e^p+c^p-ad^{p^2}\equiv e^p+c\, (\modd \mathcal{O}_v).
\]
Hence in particular $v(e^p-c)\geq 0$ and this yields that
\[v\left( \left( \frac{e}{d}\right) ^p- \frac{c}{d^p} \right )\geq -vd^p>0.\]
This implies that $\frac{e}{d}v=\left(\frac{c}{d^p}v\right)^{\frac{1}{p}}= av^{\frac{1}{p^2}}$. Since $Ev=Kv((av)^{\frac{1}{p}})$, by the fundamental inequality we obtain in particular that $(E'.E|E,v)$ is an extension of degree $p$. This shows that $E$ and $E'$ are $K$-linearly disjoint. 
\end{proof}

\begin{example} \label{ex2_resf_mixed}
Take a  henselian valued field $(K,v)$ of characteristic 0 and residue characteristic $p>0$  with $p$-divisible value group $vK=(vK)_{vp}$  and imperfect residue field. Assume moreover that $(K,v)$ is a defectless field. Take $(K_0,v)$ to be the absolute ramification field of $(K,v)$. Note that $vK_0=(vK_0)_{vp}$ remains  $p$-divisible and $K_0v$ remains imperfect. Moreover, by Proposition~\ref{defecless_ramfield} the field $(K_0,v)$ is defectless. 

Take $a\in K_0$ with $\eta_0:=(av)^{\frac{1}{p}}\notin K_0v$. By Lemma~\ref{2ext} there are linearly disjoint  extensions $(L|K,v)$ and $(K_1|K,v)$ of degree $p$ such that  $K_1v=Lv=K_0v(\eta_0)$. 

Assume that we have constructed a chain  $K_0\subset K_1\subset \ldots\subset K_i$ of algebraic extensions such that each $(K_i|K_{i-1},v)$ is a Galois extension of degree $p$ and $L$ is linearly disjoint from $K_i$. Assume moreover that $L.K_{i-1}v=K_iv$. Since $L$ and $K_i$ are $K_0$-linearly disjoint, the extension $L.K_i|K_i$ is of degree $p$. Since $(K_0,v)$ is a defecless field, the extension $(L.K_i|K_0,v)$ is defectless. Thus also $(L.K_i|K_i,v)$ is defectless. Since $vK_i$ is $p$-divisible and $K_iv$ is separably-algebraically closed, we deduce that $L.K_iv|K_iv$ is a purely inseparable extension of degree $p$. Take $\eta_i\in L.K_iv\setminus K_iv$  and $a_i\in K_i$ with $a_iv=\eta_i^p$. By Lemma~\ref{2ext} there are linearly disjoint  extensions $(E|K_i,v)$ and $(E'|K_i,v)$ of degree $p$ such that  $Ev=E'v=K_iv(\eta_i)$. Since $E$ and $E'$ are $K_i$-linearly disjoint, at least one of them is different from $L.K_i$. If $E\neq L.K_i$, set $K_{i+1}=E$. Otherwise set $K_{i+1}=E'$. As $K_{i+1}\neq L.K_i$ and $K_0=K_0^r$ by the last assertion of Theorem~\ref{tame_valgp_res} we deduce that $K_{i+1}|K_i$ and $L.K_i|K_i$ are linearly disjoint Galois extensions of degree $p$. As also $L$ and $K_i$ are $K_0$-linearly disjoint, we obtain that $L$ and $K_{i+1}$ are $K_0$-linearly disjoint. Moreover, by the choice of $\eta_i$, we have that
\[
L.K_iv=K_iv(\eta_i)=K_{i+1}v.
\]

Set $F:=\displaystyle\bigcup_{i=1}^{\infty}K_i$. Since $L$ is $K_0$-linearly disjoint from each $K_i$, it is also $K_0$-linearly disjoint from $F$. Thus $L.F|F$ is a Galois extension of degree $p$. As the value group of $K_0$ is $p$-divisible, the same holds for $vF$. Thus $vL=vF$. Moreover,
\[
Fv=\bigcup_{i=1}^{\infty}K_iv=\bigcup_{i=1}^{\infty}L.K_iv=L.Fv.
\]
Hence $(L.F|F,v)$ is a nontrivial defect Galois extension of degree $p$. Hence $(F,v)$ is an algebraic extension of $(K,v)$ which admits a Galois defect extension of degree $p$. 
\EXend
\end{example}

The above construction leads to the following conclusion. 

\begin{lemma} \label{prop_resf_mixed}
Take a  henselian and defectless valued field $(K,v)$ of characteristic 0 and residue characteristic $p>0$  with  $p$-divisible value group $vK=(vK)_{vp}$ and imperfect residue field.  Then there is an algebraic extension $(F|K,v)$ such that the field $(F,v)$ is not defectless.

\end{lemma}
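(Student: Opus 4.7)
The plan is to follow the strategy of Example~\ref{ex2_resf_mixed}, since the lemma is precisely the abstract conclusion of that construction. First I would replace $(K,v)$ by its absolute ramification field $K_0 := K^r$: by Proposition~\ref{defecless_ramfield} the field $(K_0,v)$ remains defectless, while Theorem~\ref{tame_valgp_res} tells us that $vK_0$ is the $p'$-divisible hull of $vK$ (hence still $p$-divisible) and $K_0v = (Kv)\sep$, which is still imperfect because a separable algebraic extension of an imperfect field is imperfect. A defect extension over an algebraic extension of $K_0$ restricts to a defect extension over the corresponding algebraic extension of $K$, so this reduction is harmless.

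Next I would pick $a \in K_0$ with $\eta_0 := (av)^{1/p} \notin K_0 v$ and apply Lemma~\ref{2ext} to produce two $K_0$-linearly disjoint unibranched extensions of $K_0$ of degree $p$ with common residue field $K_0 v(\eta_0)$; call them $L$ and $K_1$. I would then build inductively a chain $K_0 \subset K_1 \subset K_2 \subset \cdots$ of degree-$p$ Galois extensions maintaining the invariants \emph{(i)} $L$ is $K_0$-linearly disjoint from $K_i$, and \emph{(ii)} $K_{i+1}v = L \cdot K_i v$. At each inductive step, the hypothesis that $(K_0,v)$ is defectless combined with \emph{(i)} makes $(L \cdot K_i | K_i, v)$ a defectless extension of degree $p$, and because $vK_i$ is $p$-divisible and $K_i v$ is separably algebraically closed, the corresponding residue field extension is purely inseparable of degree $p$. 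Choosing $\eta_i \in (L \cdot K_i)v \setminus K_i v$ and any lift $a_i \in K_i$ of $\eta_i^p$, Lemma~\ref{2ext} furnishes two $K_i$-linearly disjoint Galois extensions of $K_i$ of degree $p$ with residue field $K_i v(\eta_i)$; at least one of them differs from $L \cdot K_i$, and I select that one as $K_{i+1}$.

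Setting $F := \bigcup_i K_i$, linear disjointness passes to the union, so $L \cdot F | F$ is a Galois extension of degree $p$. Since $vF$ is $p$-divisible, $v(L \cdot F) = vF$, and by construction $(L \cdot F)v = \bigcup_i L \cdot K_i v = \bigcup_i K_{i+1}v = Fv$. Thus $(L \cdot F | F, v)$ is an immediate extension of degree $p$, which by the Lemma of Ostrowski~(\ref{Ostrowski}) is a defect extension; hence $(F,v)$ is an algebraic extension of $(K,v)$ that is not defectless. The main delicate point is the inductive step: one must simultaneously ensure that $K_{i+1} \neq L \cdot K_i$, so that $L$ is ``absorbed'' into the residue field of $F$ in the limit, and that $L$ remains $K_0$-linearly disjoint from $K_{i+1}$. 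Both requirements are delivered by Lemma~\ref{2ext}, which supplies a pair of $K_i$-linearly disjoint candidates so that at least one must miss the forbidden field $L \cdot K_i$; disjointness of $L$ from $K_{i+1}$ over $K_0$ then follows by combining disjointness of $K_{i+1}$ from $L \cdot K_i$ over $K_i$ with the inductive disjointness of $L$ from $K_i$ over $K_0$.
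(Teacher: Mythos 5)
Your proposal is correct and takes essentially the same route as the paper, which proves this lemma precisely via the construction of Example~\ref{ex2_resf_mixed}: passing to the absolute ramification field $K_0=K^r$, using Lemma~\ref{2ext} to produce the linearly disjoint degree-$p$ extensions at each stage, building the tower $K_0\subset K_1\subset\cdots$ with the two invariants you name, and obtaining the immediate Galois extension $(L.F|F,v)$ of degree $p$ over the union. The delicate points you flag (choosing $K_{i+1}\neq L.K_i$ and propagating $K_0$-linear disjointness of $L$ from $K_{i+1}$) are handled in the paper exactly as you describe.
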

We are now able to state the relation between roughly tame fields and defectless fields. 

\begin{theorem}  \label{thm_rtame}
Assume that $(K,v)$ is a henselian field.  Then $(K,v)$ is roughly tame if and only if every algebraic extension of $(K,v)$ is a defectless field. 
\end{theorem}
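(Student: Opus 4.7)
The forward direction is immediate from results already established: by Lemma~\ref{rt} every algebraic extension of a roughly tame field is roughly tame, and by Proposition~\ref{roughly_charact} every roughly tame field is defectless.

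For the converse, the plan is to reduce to the core field and dispatch cases. Assuming every algebraic extension of $(K,v)$ is defectless, Proposition~\ref{coars_mixed_prop} gives that every algebraic extension of the core field $(Kv_0,v_1)$ is also defectless; it then suffices to prove the core field is tame, since that is by definition what $(K,v)$ being roughly tame means. If $\chara Kv=0$, then $v_1$ is trivial and the core field is trivially tame. If $\chara K=p>0$, then $v_0$ is trivial, the core field coincides with $(K,v)$, and the equicharacteristic case of Theorem~\ref{charact_equal} already proved in Section~3 applies directly. This leaves the genuine mixed-characteristic case $\chara Kv_0=0$, $\chara Kv=p>0$.

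In that case I would argue by contradiction: suppose the core field is not tame. Since $(Kv_0,v_1)$ is henselian by Lemma~\ref{coars_hens} and defectless by what we have just derived, condition (TF3) holds for it, so either (TF1) or (TF2) must fail. If (TF1) fails, then $v_1(Kv_0)=(vK)_{vp}$ is not $p$-divisible; note that inside $v_1(Kv_0)$ the convex hull of $v_1 p$ is all of $v_1(Kv_0)$, so $(v_1(Kv_0))_{v_1 p}=v_1(Kv_0)$ is not $p$-divisible, and Lemma~\ref{prop_valgp_mixed} yields an algebraic extension of the core field which is not defectless. If (TF1) holds but (TF2) fails, then the core field satisfies every hypothesis of Lemma~\ref{prop_resf_mixed} (henselian, defectless, characteristic~$0$ with residue characteristic~$p$, $p$-divisible value group equal to its own convex hull of $v_1p$, and imperfect residue field), which again supplies a non-defectless algebraic extension of $(Kv_0,v_1)$. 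In either situation Theorem~2.14 of~\cite{[K4]}, invoked exactly as in the proof of Proposition~\ref{coars_mixed_prop}, lifts this to an algebraic extension $(L|K,v)$ whose core field is the constructed non-defectless extension; Proposition~\ref{coars_mixed_prop} then forces $(L,v)$ itself to be non-defectless, contradicting the hypothesis.

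The main obstacle is not the logical skeleton but rather the mixed-characteristic constructions packaged in Lemmas~\ref{prop_valgp_mixed} and~\ref{prop_resf_mixed}: in characteristic zero the naive ``pass to the perfect hull of $K^r$'' trick from the equicharacteristic proof of Theorem~\ref{charact_equal} no longer produces defect extensions, and Kummer defect extensions of degree $p$ must instead be built by hand through the congruence Lemma~\ref{congruece} as in Examples~\ref{ex_valgp_mixed}--\ref{ex2_resf_mixed}; Lemma~\ref{prop_resf_mixed} in particular forces one to first pass to the absolute ramification field in order for the iterative construction of Example~\ref{ex2_resf_mixed} to go through. Given these lemmas, the rest is a routine assembly of Proposition~\ref{coars_mixed_prop} and the definitions.
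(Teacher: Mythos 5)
Your proof is correct and follows essentially the same route as the paper's: reduction to the core field via Proposition~\ref{coars_mixed_prop}, a case split on which of the tameness conditions fails for the core field, and the constructions packaged in Lemmas~\ref{prop_valgp_mixed} and~\ref{prop_resf_mixed}. The only immaterial deviations are that the paper applies Lemma~\ref{prop_valgp_mixed} directly to $(K,v)$ (whose hypothesis $(vK)_{vp}$ not $p$-divisible is exactly (RTF1) failing) rather than to the core field, and handles the forward direction through Proposition~\ref{tame_impl_def} applied to the core field instead of via Lemma~\ref{rt}.
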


\begin{proof}
Note first that by  Theorem~\ref{charact_equal} it is enough to show the equivalence in the mixed characteristic case. Assume thus that char$K=0$ and char$Kv=p>0$. Take the composition $v=v_0\circ v_1$, where  $v_0$ is the finest coarsening of $v$ that has residue characteristic 0. 

Since $(K,v)$ is henselian, $v$ extends  in a unique way to $\tilde{K}$. Denote this extension again by $v$.
Assume that the field $(K,v)$ is roughly tame. Then the valued field $(Kv_0,v_1)$ is tame.  Proposition~\ref{tame_impl_def} yields that every algebraic extension of $(Kv_0,v_1)$ is a defectless field. By Proposition~\ref{coars_mixed_prop}, every algebraic extension of $(K,v)$ is a defectless field. 

Assume now that $(K,v)$ is not roughly tame. We show that then there is an algebraic extension $(F|K,v)$ such that $(F,v)$ is not a defectless field. If (RTF3) does not hold, we can take $F=K$. If (RTF1) does not hold, then the existence of $(F,v)$ follows from Lemma~\ref{prop_valgp_mixed}.  

Assume now that $vK$ is roughly $p$-divisible, $(K,v)$ is a defectless field and $Kv$ is imperfect.   Then by Lemma~\ref{coars_hens} and Proposition~\ref{coars_mixed_prop} the core field $(Kv_0,v_1)$ of $(K,v)$ is henselian and defectless. Furthermore, 
$v_1(Kv_0)= (vK)_{vp}$ is $p$-divisible and $(Kv_0)v_1=Kv$ is imperfect.
By Lemma~\ref{prop_resf_mixed}, the core field of $(K,v)$ admits an algebraic extension which is not a defectless field. Hence Proposition~\ref{coars_mixed_prop} yields that also $(K,v)$ admits an algebraic extension which is not a defectless field.
\end{proof}

The following fact is an immediate consequence of  Theorem~\ref{thm_rtame} and Lemma~\ref{rtame_equchar}.
\begin{corollary}
Assume that $(K,v)$ is a henselian field of characteristic $0$ and positive residue characteristic $p$ such that $(vK)_{vp}=vK$. Then $(K,v)$ is tame if and only if every algebraic extension of $(K,v)$ is a defectless field. 
\end{corollary}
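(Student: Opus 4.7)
The plan is to deduce this corollary directly from the two immediately preceding results---Theorem~\ref{thm_rtame} and Lemma~\ref{rtame_equchar}---with no new construction required. First I would observe that the hypotheses $\chara K=0$, $\chara Kv=p>0$, and $(vK)_{vp}=vK$ place $(K,v)$ squarely into the second case of Lemma~\ref{rtame_equchar}, namely $\chara Kv>0$ together with $(vK)_{vp}=vK$. Applying that lemma, tameness and rough tameness coincide for $(K,v)$.

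Next I would invoke Theorem~\ref{thm_rtame}, which, for any henselian valued field, equates being roughly tame with the property that every algebraic extension is a defectless field. Chaining these two equivalences yields the corollary: $(K,v)$ is tame $\Longleftrightarrow$ $(K,v)$ is roughly tame $\Longleftrightarrow$ every algebraic extension of $(K,v)$ is a defectless field.

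The main ``obstacle'' is really only one of bookkeeping: the hypotheses of the corollary must match exactly those needed to invoke both cited results. This is immediate from the setup---$(K,v)$ is henselian by assumption, so Theorem~\ref{thm_rtame} applies, and the residue-characteristic and value-group hypotheses are precisely those of the second clause of Lemma~\ref{rtame_equchar}. In particular, there is no need to revisit the defect-extension constructions from Examples~\ref{ex_valgp_mixed} and~\ref{ex2_resf_mixed}, since those have already been absorbed into the proof of Theorem~\ref{thm_rtame}; the corollary is just a clean packaging of the fact that, under the extra hypothesis $(vK)_{vp}=vK$, the ``rough'' relaxation of condition (T1) collapses back to the original one.
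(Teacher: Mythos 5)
Your proof is correct and matches the paper exactly: the paper also presents this corollary as an immediate consequence of Theorem~\ref{thm_rtame} combined with Lemma~\ref{rtame_equchar}, with the hypothesis $(vK)_{vp}=vK$ serving precisely to collapse rough tameness to tameness. No further comment is needed.
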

Note that if the value group of $(K,v)$ is of rank 1, then $(vK)_{vp}=vK$. Together with the above corollary this proves Theorem~\ref{charact_equal} in the mixed characteristic case.

\sn

{\bf Proof of Theorem~\ref{thm_henrtame}.}\\
Suppose $K^h$ is not a roughly tame field. From  Theorem~\ref{thm_rtame}  we know that there exists an algebraic extension $L|K^h$ such that $L$ is not a defectless field. Of course $L|K$ is an algebraic extension, therefore there is an algebraic extension of K that is not a defectless field.

To prove the converse assume that $K^h$ is roughly tame and take an algebraic extension $L|K.$
We have that $L^h$ is an algebraic extension of a roughly tame field. Thus from Lemma \ref{rt} also $L^h$ is a roughly tame field and therefore it is defectless. From Theorem \ref{iffdef} we have that the field $L$ is defectless.
\QED

\section{Roughly tame, semitame and rdr fields} ~\label{sec_comar}

Studies of the structure of defect extensions of valued fields and possible extensions of the class of tame fields which preserve some of their crucial properties, led us to introduce other generalizations of the notion of tame fields. We recall now two of such generalizations introduced in~\cite{[KR]} and describe their relations with roughly tame fields.

Take a nontrivially valued field $(K,v)$ of residue characteristic exponent $p$ and denote by $\mathcal{O}_{\hat K}$ the valuation ring of the completion of $(K,v)$. Then $(K,v)$ is called a \bfind{roughly deeply ramified field} (in short an \bfind{rdr field}), if it satisfies:
\sn
{\bf (1)} the homomorphism
\[
\mathcal{O}_{\hat K}/p\mathcal{O}_{\hat K} \ni x\mapsto x^p\in \mathcal{O}_{\hat K}/p\mathcal{O}_{\hat K}
\]
is surjective;
\sn
{\bf (2)} if $\chara Kv>0$, then $vp$ is not the smallest positive element 
in $vK$.
\sn
We call $(K,v)$ a \bfind{semitame field} if it satisfies condition \bfind{ (1)} and
\sn
{\bf (2')} $vK$ is $p$-divisible.
\sn

Note that every nontrivially valued field of residue characteristic 0 is an rdr and a semitame field. Moreover, we have the following relations (cf. Theorem~1.2 of~\cite{[KR]})
\begin{theorem} \label{semitame_gdr}
Take a nontrivially valued field $(K,v)$. 
\begin{enumerate}
\item If the field $(K,v)$ is tame, then it is semitame.
\item If $(K,v)$ is semitame, then it is an rdr field. 
\item If $(K,v)$ is of equal characteristic or $vK$ is of rank 1, then the notions of semitame and rdr fields are equivalent. 
\end{enumerate}
\end{theorem}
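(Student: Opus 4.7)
The three claims are of very different difficulty. The plan is to dispose of (2) by a direct comparison of the defining conditions, reduce (1) to a Hensel-style approximation in the completion, and handle (3) by using condition (1) of the rdr definition to extract $p$-th roots inside the value group and then exploiting the structural hypothesis given by (2).

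For (2) the argument is immediate. The Frobenius condition (1) appears identically in both definitions, and if $\chara Kv=p>0$, then the $p$-divisibility of $vK$ supplies $\frac{1}{p}vp\in vK$, which sits strictly between $0$ and $vp$; thus condition (2) of the rdr definition is forced.

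For (1), assume $(K,v)$ is tame. The $p$-divisibility of $vK$ is exactly (TF1). To obtain surjectivity of Frobenius modulo $p$ on $\mathcal{O}_{\hat K}$, the plan is an iterative construction: given $x\in\mathcal{O}_{\hat K}$, build a sequence $y_n\in\mathcal{O}_{\hat K}$ whose $p$-th powers approximate $x$ ever more closely. The base case $y_0$ is chosen using perfectness of $Kv$ (condition (TF2)) so that $y_0^pv=xv$. At each step, if $\alpha_n:=v(x-y_n^p)<vp$, then by $p$-divisibility pick $z_n$ with $vz_n^p=\alpha_n$ and, using (TF2) again, adjust so that $z_n^pv$ matches the leading residue of $x-y_n^p$; setting $y_{n+1}=y_n+z_n$, the binomial expansion of $(y_n+z_n)^p$ forces $v(x-y_{n+1}^p)>\alpha_n$, since all cross terms in $(y_n+z_n)^p-y_n^p-z_n^p$ carry a factor of $p$. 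The sequence is Cauchy in the valuation topology, converges in $\mathcal{O}_{\hat K}$, and the limit satisfies $y^p\equiv x\pmod{p\mathcal{O}_{\hat K}}$.

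For (3) the forward direction is (2). Conversely, assume $(K,v)$ is rdr. In equal characteristic $0$, $p$-divisibility of $vK$ is trivial; in equal characteristic $p>0$, condition (1) says $\hat K$ is perfect (since $p$ vanishes in $\mathcal{O}_{\hat K}$), and using that the valuation-topological completion preserves the value group, for any $\alpha\in vK$ we pick $x\in K$ with $vx=\alpha$, write $x=y^p$ with $y\in\hat K$, and read off $\alpha/p=vy\in v\hat K=vK$. The rank-$1$ mixed characteristic case is the crux: first show that every $\alpha\in vK$ with $0<\alpha<vp$ satisfies $\alpha/p\in vK$, for applying (1) to $x\in K$ with $vx=\alpha$ produces $y\in\mathcal{O}_{\hat K}$ with $v(y^p-x)\geq vp>\alpha$, forcing $vy^p=\alpha$ and hence $\alpha/p=vy\in v\hat K=vK$; then invoke the $\beta\in vK$ with $0<\beta<vp$ furnished by (2) together with the archimedean property of a rank-$1$ group to write any $\alpha>0$ in $vK$ as $k\beta+r$ with $k\in\N$ and $0\leq r<\beta<vp$, reducing $\alpha/p\in vK$ to the bounded case. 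Transferring $p$-divisibility from the interval $(0,vp)$ to all of $vK$ via this archimedean reduction is the main obstacle, and is precisely what fails in higher-rank mixed characteristic, consistent with the hypothesis of (3).
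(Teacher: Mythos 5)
First, a remark on the comparison: the paper does not prove this theorem at all — it is quoted from Theorem~1.2 of~\cite{[KR]} — so there is no internal proof to measure you against. Your part (2) is correct: the only content is that $p$-divisibility of $vK$ puts $\frac{1}{p}vp$ strictly between $0$ and $vp$ when $\chara K=0$ and $\chara Kv=p>0$ (in equal characteristic $p$ one has $vp=\infty$ and the condition is vacuous). Your part (3) is also correct and is a genuinely self-contained argument: extracting $\frac{1}{p}\alpha\in v\hat{K}=vK$ for $0<\alpha<vp$ from condition (1), and then propagating $p$-divisibility to all of $vK$ by the archimedean division algorithm applied to the element $\beta$ supplied by condition (2), is the right mechanism, and your closing remark correctly identifies why this step is what fails in higher rank.

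Part (1), however, has a genuine gap at the convergence step. The increments of your sequence satisfy $v(y_{n+1}-y_n)=vz_n=\frac{1}{p}\alpha_n$, and since you only iterate while $\alpha_n<vp$, these values are bounded above by $\frac{1}{p}vp$ and hence are not cofinal in $vK$ in mixed characteristic; the sequence $(y_n)$ is therefore pseudo-Cauchy but in general \emph{not} Cauchy in the valuation topology, and it need not converge in $\mathcal{O}_{\hat{K}}$. Likewise the values $\alpha_n$ are only strictly increasing and may stabilize below $vp$ without ever reaching it. This is not a repairable technicality: your argument uses only (TF1) and (TF2) and never henselianity or defectlessness, so if it worked it would show that \emph{every} valued field with $p$-divisible value group and perfect residue field satisfies condition (1). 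That is false. In equal characteristic $p$, condition (1) says that $\mathcal{O}_{\hat{K}}$ admits $p$-th roots, and a field such as $\F_p(t^{1/p^{\infty}})(x)$, with $x$ a suitable pseudo-limit of a bounded pseudo-Cauchy sequence with support in $\frac{1}{p^{\infty}}\Z$, has $p$-divisible value group and residue field $\F_p$ while $x^{1/p}$ generates an immediate (defect) extension and does not lie in the completion; your iteration applied to such an $x$ produces exactly this divergent pseudo-Cauchy sequence. The paper's own Theorem~\ref{gdr_charact} makes the same point: beyond rough $p$-divisibility and perfectness of the relevant residue field, condition (1) additionally encodes the independent defect property. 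For a tame field the missing ingredient is (TF3): one must use that $K(x^{1/p})|K$ is a tame, hence defectless, extension to exclude the scenario in which $\sup_n\alpha_n$ is not attained and stays below $vp$. Without invoking defectlessness the proof of (1) cannot close.
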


Take a valued field  $(K,v)$. If it is of mixed characteristic, then we set
$K':=K(\zeta_p)$, where $\zeta_p$ is a primitive $p$-th root of unity. Otherwise we set $K'=K$. We call $(K,v)$ an \bfind{independent defect field} if for some extension of $v$ to 
$\tilde{K}$, all Galois defect extensions of $(K',v)$ of degree $p$ have independent
defect in the sense of ~\cite{[KR]}. Note that every defectless field is trivially an independent defect field. 

Assume that $(K,v)$ is a valued field of residue characteristic $p>0$. If $\chara K=p$  set 
$\crf (K,v):= Kv$.  If $\chara K=0$, consider the decomposition $v=v_0\circ v_p\circ \bar{v}$, where $v_0$ is the finest coarsening of $v$ 
that has residue characteristic 0, $v_p$ is a rank 1 valuation on $Kv_0\,$, and the valuation
$\bar{v}$ is  induced by $v$ on the residue field of $v_p$. Note that $v_p\circ \bar{v}$ is the core valuation. In this case set  $\crf (K,v):= Kv_0v_p$. 
To describe the relation between the class of roughly tame fields and rdr fields we will need a characterization of rdr fields proved in Theorem 1.10 of~\cite{[KR]}. 
\begin{theorem}  \label{gdr_charact}
Take a nontrivially valued field $(K,v)$ of positive residue characteristic $p$.  Then $(K,v)$  is an rdr field 
if and only if the following conditions hold:
\begin{enumerate}[(1)]
\item $vK$ is roughly $p$-divisible,
\item $\crf (K,v)$ is perfect,
\item $(K,v)$ is an independent defect field.
\end{enumerate}
\end{theorem}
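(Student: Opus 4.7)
The plan is to prove the two implications separately, first noting that both the defining Frobenius condition on $\mathcal O_{\hat K}/p\mathcal O_{\hat K}$ and conditions (1)--(3) are invariant under passage to the henselization, since $K$ and $K^h$ share the same completion, value group and residue field. I also record at the outset that condition (1) alone forces the side condition ``$vp$ is not the smallest positive element of $vK$'' of the rdr definition: $p$-divisibility of $(vK)_{vp}$ produces $vp/p\in vK$ with $0<vp/p<vp$. The working reformulation I would use throughout is that $(K,v)$ is rdr iff for every $y\in\mathcal O_K$ there exists $x\in\mathcal O_{\hat K}$ with $v(x^p-y)\geq vp$, subject to that side condition.

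For the forward direction, assume $(K,v)$ is rdr. Perfectness of $\crf(K,v)$ follows quickly: given $\bar z$ in the core residue field, lift to $z\in\mathcal O_K$ and apply Frobenius surjectivity to produce $x$ with $v(x^p-z)\geq vp$; reducing through the rank-one coarsening of core characteristic yields a $p$-th root of $\bar z$. Rough $p$-divisibility is obtained by a valuation calculation: for any $\gamma\in vK$ with $0<\gamma<vp$, picking $y\in K$ of value $\gamma$ and applying Frobenius surjectivity forces $vx^p=vy=\gamma$, whence $\gamma/p\in v\hat K=vK$; bootstrapping with the side condition (which provides some $0<\delta<vp$ in $vK$) one then obtains $p$-divisibility on the full convex subgroup $(vK)_{vp}$ through a decomposition $\gamma=n\cdot vp+\gamma'$ argument.

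The independent defect condition (3) is the subtlest: I would take an arbitrary Galois defect extension $L|K'$ of degree $p$ (Artin--Schreier if $\chara K=p$, Kummer if $\chara K=0$), unfold the definition of independent defect from~\cite{[KR]} into a statement about the set of approximation values $\{v(\vartheta-c):c\in K'\}$ for a suitable generator $\vartheta$ of $L$, and show that Frobenius surjectivity mod $p$ rules out the degeneracy patterns that characterize dependent defect---concretely, a dependent defect extension would pinpoint an element of $\mathcal O_{\hat K}$ whose $p$-th power class mod $p$ lies outside the image of Frobenius, contradicting the rdr hypothesis.

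The converse direction reverses these implications. Given (1)--(3), for a general $y\in\mathcal O_{\hat K}$ I would build a Cauchy sequence of approximations $x_n$ with $v(x_n^p-y)$ strictly increasing toward $vp$, using perfectness of $\crf(K,v)$ to match residues at each step and rough $p$-divisibility to adjust values in the appropriate convex subgroup; completeness of $\hat K$ then supplies a limit $x$ with $v(x^p-y)\geq vp$. The independent defect hypothesis is exactly what prevents this iterative lifting from stalling below $vp$, since any stall would expose a dependent defect extension obstructing the final approximation. The main obstacle is step (3): translating between the ``analytic'' condition on Frobenius in the completion and the ``structural'' condition on defect extensions requires the detailed invariants developed in~\cite{[KR]}, and bridging these two viewpoints is the technical heart of the argument.
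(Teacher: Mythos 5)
First, a point of reference: the paper does not prove this theorem at all. It is imported verbatim as Theorem~1.10 of~\cite{[KR]}, so there is no in-paper argument to compare yours against; any proof must essentially reconstruct the relevant part of that paper. Judged on its own terms, your outline handles the easier pieces reasonably: the observation that rough $p$-divisibility subsumes the side condition ``$vp$ is not the smallest positive element of $vK$'' is correct (it gives $vp/p\in(vK)_{vp}\subseteq vK$ with $0<vp/p<vp$), and the valuation-theoretic derivation of rough $p$-divisibility from Frobenius surjectivity (forcing $vx^p=vy=\gamma$ for $0<\gamma<vp$, then bootstrapping to the whole convex subgroup) is a standard and workable argument, as is the residue-matching argument for perfectness of $\crf(K,v)$. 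One caveat: your ``working reformulation'' $v(x^p-y)\geq vp$ is a mixed-characteristic statement; in equal characteristic $p$ one has $p\mathcal{O}_{\hat K}=0$ and the condition becomes exact surjectivity of Frobenius on $\mathcal{O}_{\hat K}$, so the uniform phrasing and the later ``Cauchy sequence approaching $vp$'' step need to be split by cases.

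The genuine gap is condition (3), in both directions, and this is not a peripheral issue --- it is the entire technical content of the theorem. In the forward direction you propose to ``unfold the definition of independent defect \ldots and show that Frobenius surjectivity mod $p$ rules out the degeneracy patterns that characterize dependent defect,'' and in the converse you assert that independent defect ``is exactly what prevents the iterative lifting from stalling below $vp$.'' Neither sentence contains a mechanism. Independent defect is defined in~\cite{[KR]} via the structure of the value sets $\left\{v(\sigma\vartheta-\vartheta):\sigma\in\Gal(L|K')\right\}$ attached to a Galois defect extension of degree $p$ (equivalently, via the cut determined by $v(\vartheta-c)$, $c\in K'$), and relating that invariant to surjectivity of $x\mapsto x^p$ on $\mathcal{O}_{\hat K}/p\mathcal{O}_{\hat K}$ requires the explicit Artin--Schreier and Kummer computations that occupy the bulk of the proof of Theorem~1.10 in~\cite{[KR]}. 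Moreover, the converse is not the formal reversal of the forward implications: given (1)--(3), one must show Frobenius surjectivity for \emph{every} residue class in $\mathcal{O}_{\hat K}/p\mathcal{O}_{\hat K}$, and the obstruction to a single lifting step is not obviously packaged as a Galois defect extension of degree $p$ of $K'$ to which hypothesis (3) applies --- establishing that dictionary is precisely the hard part. As written, the proposal is a plan that defers its central step to the reference rather than a proof.
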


This allows us to prove the following result.

\begin{theorem}
Take a nontrivially valued henselian field $(K,v)$.
\begin{enumerate}
\item If the field $(K,v)$ is semitame and  roughly tame, then it is tame. 
\item If $(K,v)$ is a roughly tame field, then it is an rdr field. 
\end{enumerate}
\end{theorem}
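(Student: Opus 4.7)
My plan is to unwind the definitions and appeal to the characterizations already established. For part (1), Proposition~\ref{roughly_charact} yields that a roughly tame $(K,v)$ satisfies (RTF1)--(RTF3); note that (RTF2) coincides with (TF2) and (RTF3) with (TF3). The semitame hypothesis contributes condition (2'), namely that $vK$ is $p$-divisible, which is precisely (TF1). Thus $(K,v)$ satisfies all three defining conditions of a tame field.

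For part (2), I would verify the three conditions of Theorem~\ref{gdr_charact}. Condition (1) is exactly (RTF1). Condition (3) (independent defect field) follows from (RTF3), since a defectless field has no nontrivial Galois defect extensions and the condition holds vacuously. The essential work is condition (2): that $\crf(K,v)$ is perfect. In the equal-characteristic case $\chara K = p$, one has $\crf(K,v) = Kv$, which is perfect by (RTF2). The interesting case is mixed characteristic, $\chara K = 0$ and $\chara Kv = p$; here we consider the decomposition $v = v_0 \circ v_p \circ \bar{v}$ from the definition of $\crf$. The core valuation is $v_1 = v_p \circ \bar{v}$ on $Kv_0$, and by hypothesis $(Kv_0, v_1)$ is tame.

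Applying Lemma~\ref{coars_hens} and Lemma~\ref{coars_def} to the further decomposition $v_1 = v_p \circ \bar{v}$, the field $(Kv_0v_p, \bar{v})$ is henselian and defectless. Its value group $\bar{v}(Kv_0v_p)$ is a convex subgroup of the $p$-divisible group $v_1(Kv_0)$, hence itself $p$-divisible (a short convexity check: $g \in H$ positive with $g = ph$ forces $0 < h < g$, so $h \in H$). Its residue field is $(Kv_0)v_1 = Kv$, perfect by (TF2) for $v_1$. So $(Kv_0v_p, \bar{v})$ satisfies (TF1)--(TF3) and is tame. Moreover, since $v_0 \circ v_p$ is a coarsening of $v$ strictly finer than $v_0$, the maximality of $v_0$ forces it to have residue characteristic $p$, so $\chara Kv_0v_p = p$. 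Thus $(Kv_0v_p, \bar{v})$ is a tame field of equal positive characteristic, and such fields are perfect: any nontrivial purely inseparable finite extension would be purely wild, hence by the characterization cited in the preliminaries linearly disjoint from the ramification field, but for a tame field the ramification field equals the algebraic closure, forcing the extension to be trivial. Therefore $\crf(K,v) = Kv_0v_p$ is perfect, completing the verification. The main technical obstacle is precisely this mixed-characteristic analysis of condition (2); the remainder consists of direct appeals to the earlier structural lemmas.
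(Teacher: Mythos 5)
Your proof is correct and takes essentially the same route as the paper's: part (1) is the identical combination of Proposition~\ref{roughly_charact} with condition (2'), and part (2) verifies the three conditions of Theorem~\ref{gdr_charact} using the same decomposition $v=v_0\circ v_p\circ\bar{v}$ and the same lemmas on compositions of valuations. The only cosmetic difference is in the perfectness of $\crf(K,v)$ in mixed characteristic: you first show $(Kv_0v_p,\bar{v})$ is a tame field of equal characteristic $p$ and invoke that tame fields are perfect, whereas the paper argues directly that a nontrivial purely inseparable extension of $Kv_0v_p$ would be immediate, hence a defect extension, contradicting the defectlessness supplied by Lemma~\ref{coars_def} --- the same underlying facts either way (and your explicit convexity check for the $p$-divisibility of $\bar{v}(Kv_0v_p)$ is in fact slightly more careful than the paper's wording).
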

\begin{proof}
Assume that $(K,v)$ is semitame. Then (TF1) holds. If $(K,v)$ is in addition roughly tame, then by Proposition~\ref{roughly_charact} also conditions (TF2) and (TF2) hold. Hence $(K,v)$ is tame. 

Suppose now that $(K,v)$ is a roughly tame field. If $\chara Kv=0$, then the assertion is obvious. Thus we can assume that $\chara Kv=p>0$. 
 Then by Proposition~\ref{roughly_charact} the value group $vK$ is roughly $p$-divisible. Moreover, $(K,v)$ is a defectless field, hence it is in a trivial way an independent defect field. It remains to show that $\crf (K,v)$ is perfect. If $\chara K>0$, then $\crf (K,v)=Kv$ is perfect by condition (RTF2). Thus we can assume that $(K,v)$ is of mixed characteristic.  Since $\bar{v}(Kv_0v_p)$ is a subgroup of $(vK)_{vp}$, it is also $p$-divisible. Moreover, the residue field $Kv$ of $(Kv_0v_p,\bar{v})$ is perfect.  Hence any purely inseparable extension of $(Kv_0v_p, \bar{v})$ is a unibranched immediate extension. Suppose there were a nontrivial purely inseparable extension of $Kv_0v_p$. Then it would be a defect extension. On the other hand, the field $(K,v)$ is  defectless by condition (RTF3). Hence Proposition~\ref{coars_def} implies that also  $(Kv_0v_p, \bar{v})$ is defectless, a contradiction. Consequently, also in this case $\crf (K,v)=Kv_0v_p$ is perfect. Therefore, by Theorem~\ref{gdr_charact} $(K,v)$ is a rdr field.
\end{proof}

\newcommand{\lit}[1]{\bibitem{#1}}

\abaddress
\psaddress
\end{document}